\documentclass{article}
\usepackage[utf8]{inputenc}
\usepackage{amsfonts,amsmath}
\usepackage{mdframed}
\usepackage{graphicx}
\usepackage[
    backend=biber,
    style=alphabetic,
    maxbibnames=99,    
    minbibnames=99
]{biblatex} 
\usepackage{hyperref}
\usepackage{algpseudocode}
\usepackage{multirow}
\usepackage{array}
\usepackage{subcaption}

\usepackage{tikz}
\usetikzlibrary{arrows.meta,,positioning}

\usepackage{caption}

\usepackage[ruled,boxed,linesnumbered,commentsnumbered,longend]{algorithm2e}

\usepackage{comment}

\usepackage{amsthm}

\usepackage{multirow}

\newtheorem{lemma}{Lemma}

\newtheorem{proposition}{Proposition}[section]

\newtheorem{assumption}{Assumption}[section]
\newtheorem{example}{Example}[section]

\title{Dynamic Programming for\\ A Linear-Gaussian Dual Control Problem\footnote{Y. Wang and H. Hjalmarsson are with the Department of Decision and Control Systems, KTH Royal Institute of Technology, Sweden. Y. Li is with the Fulton School of Engineering, Arizona State University, AZ, USA.}}

\author{Ying Wang, Yuchao Li, and H{\aa}kan Hjalmarsson}

\usepackage{setspace}
\begin{document}
\maketitle

\begin{abstract}
We consider an infinite-horizon discounted dynamic programming (DP) problem for a countable-innovation belief-state model. The model is a surrogate for a linear-Gaussian dual control problem involving an unknown parameter vector and Gaussian observation noise. The surrogate preserves the first two moments of the Gaussian innovation and the resulting posterior second-moment structure. We allow unbounded belief and control spaces and nonnegative stage costs with quadratic growth in the posterior mean and covariance. Using a posterior-moment weight, we show that the Bellman operator is a contraction in a weighted sup-norm space. We characterize the optimal cost as the unique solution of the Bellman equation in this space, establish the existence of optimal stationary policies. We further prove weighted sup-norm convergence of value iteration, policy iteration, and optimistic policy iteration under vanishing approximation errors. The convergence results under vanishing approximation errors rely on general weighted sup-norm contraction properties and are applicable beyond the particular dual control problem considered here.
\end{abstract}


\vspace{1pc}

\section{Introduction}
\label{sec:intro}

A basic feature of dual control is that an input affects both current performance and the information available for future decisions, and hence future costs~\cite{feldbaum1963dual,kumar1985survey,mesbah2018stochastic,derpich2023dual,meijer2026dual}. We study this effect for the linear-Gaussian regression model
\begin{equation}
    y_k=u_k'\theta+e_k,
\label{eq:io}
\end{equation}
where prime denotes transposition, $\theta$ is an unknown $n$-dimensional parameter, $u_k$ is the input, $y_k$ is the scalar observation, and $e_k$ is Gaussian noise. Under a Gaussian prior on $\theta$, the posterior distribution remains Gaussian with mean $m_k$ and covariance $P_k$. We therefore take the pair $(m_k,P_k)$ as the belief state $x_k$. The input $u_k$ incurs a current cost and affects the evolution of the belief state through Eq.~\eqref{eq:io}. Our objective is to develop a tractable discounted infinite-horizon dynamic programming (DP) theory motivated by this linear-Gaussian dual control problem, while accommodating unbounded belief and input spaces and stage costs with quadratic growth.

This problem has a long history in the control and economics literature. In particular, Prescott~\cite{prescott1972multi} studied this regression problem and emphasized the tradeoff between current performance and experimentation. Kiefer and Nyarko subsequently developed an exact discounted infinite-horizon DP theory \cite{easley1988controlling,kiefer1989optimala}. Under the assumptions of a compact input space and bounded continuous costs, they established continuity of the optimal cost function, the Bellman optimality equation, and the existence of stationary optimal policies by a contraction argument on the space of bounded continuous functions. However, their theory does not cover the unbounded input spaces and quadratically growing costs considered here. Subsequent work has focused primarily on computational or approximate approaches, including numerical value iteration~\cite{kiefer1989value,wieland2000learning,wieland2000monetary}, rollout~\cite{han2006approximate}, convex-optimization-based approximations~\cite{lobo1999policies,wang2024regret}, and receding-horizon approximations~\cite{wang2025revisiting}. Related linear-Gaussian observation models have also been studied in the Bayesian linear-bandit literature, with an emphasis on regret analysis; see, e.g., \cite{zhu2026prior}.

The first difficulty in extending the infinite-horizon theory in \cite{easley1988controlling,kiefer1989optimala} to the present problem is that the relevant cost functions are generally unbounded. As a result, the usual contraction analysis on bounded functions is not directly applicable. A natural alternative is weighted sup-norm methods~\cite{lippman1975dynamic,wessels1977markov,vannunen1978DP,bertsekas2012weighted,bertsekas2022abstract,ma2022unbounded,toda2024unbounded}. In particular, the analyses in \cite{ma2022unbounded,toda2024unbounded} rely on an inequality of the form
$$E\{v(x_{k+1})\mid x_k=x,u_k=u\}\leq c v(x),$$
for some constant $c>0$, where $v(x)$ is a weight function used to define the weighted sup-norm space.

For the quadratic-growth costs considered here, we introduce the posterior moment weight
\begin{equation}
    v(x)=1+\|m\|^2+\operatorname{tr}(P)
\label{eq:weight}
\end{equation}
for the state $x=(m,P)$, where $\|\cdot\|$ denotes the Euclidean norm and $\operatorname{tr}(\cdot)$ denotes the trace function. In contrast to the inequality used in \cite{ma2022unbounded,toda2024unbounded}, a key observation of this paper is that this particular weight satisfies an exact preservation identity under the belief-state transition. Specifically, we prove that
\begin{equation}
    E\left\{v(x_{k+1})\mid x_k=x,\,u_k=u\right\}=v(x)
\label{eq:weight_preservation}
\end{equation}
for every state-input pair $(x,u)$, as will be shown in Section~\ref{sec:weight}. This exact identity is the main structural property underlying the contraction and algorithmic analysis developed in this paper.

A second and logically separate difficulty concerns measurability. For the original linear-Gaussian problem, the posterior-mean update is driven by a continuous Gaussian innovation. With continuous innovations and an uncountable input space, one must ensure that the conditional expectations and minimizations appearing in the DP analysis and algorithms preserve the appropriate measurability properties. In particular, even when a function is Borel measurable in the state and input, its infimum over the control need not be Borel measurable as a function of the state; see \cite{bertsekas1978stochastic,shreve1978alternative}.

These issues can be treated within the general theory of stochastic control on Borel spaces, including the universal-measurability framework of Bertsekas and Shreve~\cite{bertsekas1978stochastic}. Under additional continuity and compactness-type conditions, Feinberg et al.~\cite{feinberg2012average,feinberg2016partially} develop results for Markov decision processes and partially observed Markov decision processes (POMDPs) with Borel state and control spaces, allowing noncompact control sets and unbounded stage costs. Their theory establishes the Bellman equation, lower semicontinuity of the optimal cost, existence of stationary optimal policies, and pointwise convergence of value iteration (VI for short). More recent work further develops verifiable continuity conditions for broad classes of partially observed models and nonlinear filters, including Kalman-type filters~\cite{feinberg2024sufficient,feinberg2025continuity}. Thus, the continuous-innovation model may in principle be treated within this more general Borel-space theory.

Our objective here is different. We seek fixed-point and algorithmic conclusions that exploit the particular posterior-moment structure of the regression model. The general Borel-space results described above do not directly provide a contraction of the Bellman operator in a function space accommodating the quadratic growth considered here, or uniqueness of the Bellman fixed point in such a weighted space. Nor do they directly establish weighted sup-norm convergence guarantees for the DP algorithms considered below under vanishing approximation errors.

To isolate this contractive structure without introducing the additional measurable function and measurable selection machinery required in the general Borel-space theory, we replace the continuous Gaussian innovation by a countable-support innovation with the same first two moments, namely zero mean and unit variance. The resulting model is a surrogate for the original linear-Gaussian problem rather than an equivalent representation. This choice preserves the posterior second-moment identity underlying the weighted-space analysis, while reducing the conditional expectations in the Bellman operators to countable sums. It also has a natural computational interpretation, since numerical treatment of the original Gaussian expectation generally requires some form of discrete approximation. Quantifying the relation between the optimal costs and policies of the countable- and continuous-innovation models is left for future research.

A further issue arises for policy-based algorithms. Although the Bellman operator is contractive on the weighted space, a policy-specific Bellman operator need not map this space into itself for an arbitrary policy. We therefore identify a class of policies, which we call \emph{proper} policies, for which the policy-specific operators are well posed and contractive, and show that policy iteration (PI) and optimistic PI generate policies that remain within this class. This makes it possible to establish convergence even when the policy-improvement steps are performed with vanishing approximation errors.

Within this countable-innovation model, we develop a discounted infinite-horizon DP theory for unbounded belief and input spaces and nonnegative stage costs with quadratic growth. Specifically, our contributions are as follows.
\begin{itemize}

\item We introduce the posterior-moment weight in Eq.~\eqref{eq:weight} and prove the weight-preservation identity~\eqref{eq:weight_preservation}. Using this identity, we establish self-map and contraction properties of the Bellman operator and of the policy-specific Bellman operators associated with proper policies on the corresponding weighted sup-norm space.

\item We characterize the optimal cost as the unique solution of the Bellman equation in the weighted space, establish its lower semicontinuity, and prove the existence of optimal stationary policies. We also show that membership of the cost function of a policy in the weighted space is equivalent to properness of the policy, and that every optimal stationary policy is proper.

\item We establish weighted sup-norm convergence of VI, PI, and optimistic PI under vanishing approximation errors. For PI and optimistic PI, we further show that the generated policies remain proper; for optimistic PI, both the iterated cost functions and the cost functions of the generated policies converge to the optimal cost. These vanishing-error convergence results are not specific to the present belief-state model and extend to general weighted sup-norm contractive DP models under suitable assumptions.

\end{itemize}

The remainder of the paper is organized as follows. Section~\ref{sec:prb_set} formulates the countable-innovation belief-state DP problem. Section~\ref{sec:weight} introduces the weighted sup-norm space, proves the posterior weight-preservation property, and establishes the self-map and contraction properties of the Bellman operators. Section~\ref{sec:bellman} connects the Bellman fixed point with the optimal cost function and proves the existence and properness of optimal stationary policies. Section~\ref{sec:vi_algo} studies VI, PI, and optimistic PI. Section~\ref{sec:numerical_result} presents a numerical example, and Section~\ref{sec:conclusion} concludes the paper.

\section{Problem Formulation}
\label{sec:prb_set}

In this section, we describe the Bayesian update associated with the linear-Gaussian model introduced in Section~\ref{sec:intro}. The posterior distribution of the unknown parameter is characterized by its mean and covariance, which we use as the state of the DP problem. For the DP analysis, we replace the Gaussian innovation by a countable-support random variable with the same first two moments. We then introduce the stage cost, the infinite-horizon discounted criterion, and the growth conditions used in the subsequent analysis.

\subsection{Belief State Update for Linear-Gaussian Model}
\label{sec:posterior_update}

We consider the linear-Gaussian model~\eqref{eq:io}, where $\theta\in\Re^n$ is an unknown parameter, $y_k\in\Re$ is the measured output, and ${e_k}$ is a sequence of independent Gaussian random variables with zero mean and variance $\sigma^2$. The input $u_k\in U$ is the control variable, where $U\subset\Re^n$ is a nonempty control space.

We model the unknown parameter $\theta$ as a Gaussian random vector with mean $m_0\in\Re^n$ and covariance $P_0\in S_{+}^n$, where $S_{+}^n$ denotes the set of $n$ by $n$ positive semidefinite matrices. The initial information is denoted by $I_0=(m_0,P_0)$. For $k\geq 1$, let $I_k$ denote the information available prior to the selection of $u_k$:
$$I_k=(m_0,P_0,u_0,y_0,u_1,y_1,\ldots,u_{k-1},y_{k-1}).$$
Thus, $u_k$ is selected on the basis of $I_k$.

Under the linear-Gaussian model~\eqref{eq:io}, the posterior distribution of $\theta$ given $I_k$ remains Gaussian; see, cf.~\cite[Sec.~11]{ljung1999system}. We denote its mean and covariance by $m_k\in\Re^n$ and $P_k\in S_{+}^n$, respectively. The corresponding Bayesian update can be written in innovation form as
\begin{equation}\label{eq:m_update}
    m_{k+1}= m_k+\frac{P_ku_k}{\sqrt{\sigma^2+u_k'P_ku_k}}\epsilon_k,    
\end{equation}
and
\begin{equation}\label{eq:P_update}
    P_{k+1}=P_k-\frac{P_ku_ku_k'P_k}{\sigma^2+u_k'P_ku_k}.
\end{equation}
where $\epsilon_k$ is the standardized innovation and has zero mean and unit variance. A derivation is given in Appendix~\ref{app:innovation} for completeness.

As discussed in Section~\ref{sec:intro}, our DP analysis replaces the continuous Gaussian innovation by a countable-support innovation with the same first two moments, namely zero mean and unit variance. These are precisely the innovation properties used in the posterior-moment analysis developed later.

\subsection{Dynamic Programming Formulation}\label{sec:dp_prob_formulation}
The posterior update in Section~\ref{sec:posterior_update} provides the state equation for the DP formulation. Given the posterior mean and covariance $(m_k,P_k)$ and the control $u_k$, the distribution of $(m_{k+1},P_{k+1})$ is determined independently of the preceding observations and controls. Thus, $x_k=(m_k,P_k)$ is an information state, and the posterior recursion defines a controlled Markov system.

For the DP analysis, we use the posterior recursion derived in Section~\ref{sec:posterior_update} with the Gaussian innovation replaced by a countable-support random variable $w_k$ with zero mean and unit variance.
We thus consider a discounted infinite-horizon DP problem with state space $X=\Re^n\times S_{+}^n$. At state $x_k=(m_k,P_k)$, a control $u_k\in U$ is selected, and the next state is given by
\begin{equation}
\label{eq:dynamics_def}
    m_{k+1}=f_1(x_k,u_k,w_k),
    \qquad
    P_{k+1}=f_2(P_k,u_k),
\end{equation}
The state equations $f_1$ and $f_2$ have the same form as the Gaussian posterior update Eqs.~\eqref{eq:m_update}-\eqref{eq:P_update}, except that the Gaussian innovation is replaced by the countable-support innovation $w_k$. 

For notational convenience, we write the state equation as
\begin{equation}
\label{eq:dynamics_f_def}
x_{k+1}=f(x_k,u_k,w_k),
\qquad k=0,1,\ldots,
\end{equation}
where $X\times U\times W\mapsto X$ is defined by Eq.~\eqref{eq:dynamics_def}, and $W$ denotes the countable support of the innovation $w_k$. Associated with a state-control pair $(x_k,u_k)$ is a deterministic nonnegative stage cost $g(x_k,u_k)$ defined as
\begin{equation}
    \label{eq:cost_def}
    g(x_k,u_k)
    =
    \big(u_k-\nu(x_k)\big)'R\big(u_k-\nu(x_k)\big)
    +
    \psi(x_k),
\end{equation}
where $R$ is a positive definite matrix, $\nu: X\mapsto\Re^n$ is a given function, referred to as the reference control law, and $\psi: X\mapsto[0,\infty)$ is a belief-state penalty. Thus, the first term penalizes deviation of the control from $\nu(x)$, while $\psi(x)$ penalizes the belief state. This cost structure is motivated by and generalizes the practical quadratic model discussed in Appendix~\ref{app:cost_motivation}.

We are interested in feedback policies of the form $\pi=\{\mu_0, \mu_1, \dots\}$, where each $\mu_k$ is a function mapping $X$ to $U$. We denote by $\mathcal M$ the set of all functions that maps $X$ to $U$, and by $\Pi$ the set of all policies. For a given policy $\pi$, its cost function starting from an initial state $x_0$ is defined as\footnote{The limit in Eq.~\eqref{eq:cost_f_2i} is well-defined in view of nonnegativity of $g(x_k,u_k)$; cf. Eq.~\eqref{eq:cost_def}.}
\begin{equation}
    \label{eq:cost_f_2i}
    J_\pi(x_0)=\lim_{N\to\infty}\mathop{E}
    \left\{\sum_{k=0}^{N-1}\alpha^kg\big(x_k,\mu_k(x_k)\big)\right\},
\end{equation}
subject to the state equation constraint \eqref{eq:dynamics_def}. Here, the expectation is taken with respect to the disturbance sequence $\{w_k\}$, and the scalar $\alpha\in (0,1)$ is a \emph{discount factor}. If a policy $\pi$ is of the form $\pi=\{\mu,\mu,\dots\}$, we call it \emph{stationary}, and when no confusion arises, we denote the stationary policy by $\mu$ and its cost function by $J_\mu$. The optimal cost function is defined as
\begin{equation}
    \label{eq:j_star_def}
    J^*(x)=\inf_{\pi\in \Pi} J_\pi(x),\qquad \forall x\in X,
\end{equation}
and a policy $\pi^*$ is said to be optimal, if it attains the minimum in Eq.~\eqref{eq:j_star_def} for all $x$, that is
$$J_{\pi^*}(x)=J^*(x)\qquad \forall x\in X.$$

Although $\nu(x)$ minimizes the one-stage cost at state $x$, it need not be optimal for the infinite-horizon problem, since the current control affects the belief-state transition and, consequently, future costs. The stage cost~\eqref{eq:cost_def} may also be unbounded as the belief state grows. Motivated by the quadratic model considered in Appendix~\ref{app:cost_motivation}, we impose the following growth conditions on the reference control $\nu$ and the belief-state penalty $\psi$. These conditions capture the growth properties of that model while allowing the analysis to be developed in a more general form.

\begin{assumption}
\label{ass:prb_set}
    There exist constants $c_\nu>0$, $c_\psi>0$ such that
    $$\|\nu(x)\|^2\le c_\nu v(x), \qquad \psi(x) \leq c_{\psi} v(x) ,\qquad \forall x\in X.$$
\end{assumption}

The following example illustrates how Assumption~\ref{ass:prb_set} arises naturally from a quadratic cost model of practical interest.
\begin{example}\label{eg:affine}
Suppose the reference control law $\nu$ and belief-state penalty $\psi$ in the stage cost \eqref{eq:cost_def} take the following form:
$$\nu(x)=Km+b,\qquad \psi(x)=m'Hm+\operatorname{tr}(GP),$$
where $K$ is a nonzero $n$ by $n$ matrix, $b\in \Re^n$, and $H$ and $G$ are positive definite matrices. This choice of $\nu$ and $\psi$ arises from the quadratic cost model derived in Appendix~\ref{app:eg}. A straightforward calculation shows that both conditions in Assumption~\ref{ass:prb_set} are satisfied.
\end{example}

\section{Weighted Sup-Norm Space, Bellman Operators, and Proper Policy}\label{sec:weight}
This section develops the weighted sup-norm framework used throughout the remainder of the paper. Because the stage cost and the resulting cost functions may be unbounded, we first introduce a weighted sup-norm space based on the posterior second moment. We then establish a weight-preservation property of the belief-state dynamics, which is used to prove self-map and contraction properties of the Bellman operators. Finally, we introduce the class of proper policies and establish the equivalence between properness and membership of the corresponding policy cost function in the space. These results provide the foundation for the optimality and algorithmic results developed in the subsequent sections. For several of the standard results in this section, we provide the main argument and its connection to the weight-preservation property, while detailed proofs are given in Appendix~\ref{app:additional_proof}.

\subsection{Functional Space and Weight Preservation Property}
The choice of the weight is guided by the growth conditions imposed on the reference control law $\nu(x)$ and the belief-state penalty $\psi(x)$ in Assumption~\ref{ass:prb_set}. These conditions show that the growth of both terms is bounded by the
function $v(x)$ defined in Eq.~\eqref{eq:weight}; i.e.,
\begin{equation}
    v(m,P)=1+\|m\|^2+\operatorname{tr}(P).
\label{eq:weight_repeat}
\end{equation}
Since $v(x)>0$ for all $x\in X$, we use $v$ as the weight function for the weighted sup-norm space.

We now define the corresponding weighted sup-norm space. Let $\mathcal B(X)$ be the set of real-valued functions $J:X\mapsto\Re$ such that $|J(x)|/v(x)$ is bounded over $X$, i.e.,
$$\mathcal{B}(X)=\left\{J:X\mapsto \Re \ \middle|\ \sup_{x\in X}\frac{|J(x)|}{v(x)}<\infty\right\}.$$
We equip $\mathcal{B}(X)$ with a norm $\|\cdot\|_v$ defined as
$$\|J\|_v=\sup_{x\in X}\frac{|J(x)|}{v(x)},\qquad \forall J\in\mathcal{B}(X).$$
The space $\mathcal{B}(X)$ is complete under this norm; see, e.g., \cite[Appendix~B.2]{bertsekas2022abstract}. We work primarily with the closed subset
\begin{equation}
    \label{eq:def_vspace}
    \mathcal{B}_+(X)=\left\{J\in \mathcal{B}(X) \ \middle|\ J(x)\geq 0,\ \forall x\in X\right\}.
\end{equation}
The cost functions considered in this paper will be shown to belong to this subset. 

We first establish a key property about the weight function $v$ in Eq.~\eqref{eq:weight_repeat} and the state equation $f$ in Eq.~\eqref{eq:dynamics_f_def}. Roughly speaking, it states that at state $x_k$, regardless of the control $u_k$ we apply, the expected value of $v(x_{k+1})$ equals $v(x_k)$. This weight-preservation property is a distinctive feature of the present belief-state dynamics and is central to the analysis that follows.

\begin{proposition}[Weight Preservation]
\label{prop:weight_preservation}
For every $(x,u)\in X\times U$, there holds
\begin{equation}
\label{eq:weight_pre}
    E\big\{v\big(f(x,u,w)\big)\big\}=v(x).
\end{equation}
Moreover, if $J\in\mathcal B_+(X)$, we have
\begin{equation}
        0\le
     E\big\{J\big(f(x,u,w)\big)\big\}
    \le
    \|J\|_v v(x).\label{eq:bounded_future_cost}
\end{equation}
\end{proposition}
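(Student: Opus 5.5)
The plan is to compute $E\{v(f(x,u,w))\}$ directly from the update equations. I will write $x=(m,P)$ and set $s=\sigma^2+u'Pu>0$, which is positive because $\sigma^2>0$ and $P\in S_+^n$. The next state is then
$$m^+=m+\frac{Pu}{\sqrt{s}}\,w,\qquad P^+=P-\frac{Puu'P}{s}.$$
Since $w$ takes countably many values, every expectation below is a countable sum. All terms are nonnegative or absolutely summable, because $E\{w^2\}=1<\infty$, so linearity of expectation over the series is justified.

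For the mean part, I expand
$$\|m^+\|^2=\|m\|^2+\frac{2m'Pu}{\sqrt s}\,w+\frac{u'P^2u}{s}\,w^2.$$
Taking expectations and using $E\{w\}=0$ and $E\{w^2\}=1$ gives
$$E\{\|m^+\|^2\}=\|m\|^2+\frac{u'P^2u}{s}.$$
For the covariance part, $P^+$ is deterministic, and
$$\operatorname{tr}(P^+)=\operatorname{tr}(P)-\frac{\operatorname{tr}(Puu'P)}{s}=\operatorname{tr}(P)-\frac{u'P^2u}{s},$$
by the cyclic property of the trace. Adding $1$ and the two pieces, the terms $\pm u'P^2u/s$ cancel exactly, which yields \eqref{eq:weight_pre}.

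One bookkeeping point must be checked: $f(x,u,w)\in X$, that is, $P^+\in S_+^n$. I will verify this either by writing $P^+=P^{1/2}\big(I-\frac{P^{1/2}uu'P^{1/2}}{s}\big)P^{1/2}$, where the middle factor has eigenvalues in $[\sigma^2/s,1]$, or by appealing to its interpretation as a posterior covariance. This ensures $v$ and $J$ are defined at the next state.

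For \eqref{eq:bounded_future_cost}, take $J\in\mathcal B_+(X)$. Then $0\le J(y)\le \|J\|_v v(y)$ for every $y\in X$. Applying this pointwise at $y=f(x,u,w)$ and taking the countable-sum expectation, which is monotone and preserves nonnegativity, gives
$$0\le E\{J(f(x,u,w))\}\le \|J\|_v\,E\{v(f(x,u,w))\}=\|J\|_v v(x),$$
where the last equality is the first part. The series for $E\{J(\cdot)\}$ converges because it is dominated by the convergent series for $v$.

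There is no real obstacle here. The only places needing care are the exact cancellation of the $u'P^2u/s$ terms, which is the heart of the identity, and the justification of interchanging sums, which nonnegativity and finite second moment make routine.
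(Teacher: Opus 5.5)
Your proposal is correct and takes essentially the same route as the paper: expand $\|f_1(x,u,w)\|^2$, use $E\{w\}=0$ and $E\{w^2\}=1$ to get $\|m\|^2+u'P^2u/s$, cancel this against $\operatorname{tr}(P)-\operatorname{tr}(f_2(P,u))$, and then take expectations of the pointwise bound $0\le J\le \|J\|_v v$. Your explicit checks that $f_2(P,u)\in S_+^n$ and that the countable sums converge are details the paper leaves implicit.
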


\begin{proof}
In view of Eqs.~\eqref{eq:dynamics_f_def} and \eqref{eq:weight_repeat}, we have
$$v\big(f(x,u,w)\big)=1+\big\|f_1(x,u,w)\big\|^2+\operatorname{tr}\big(f_2(P,u)\big).$$
Taking expectation with respect to $w$ yields
$$E\big\{v\big(f(x,u,w)\big)\big\}=1+E\Big\{\big\|f_1(x,u,w)\big\|^2\Big\}+\operatorname{tr}\big(f_2(P,u)\big).$$
Recall that $w$ is a random variable with mean $0$ and variance $1$. Therefore we obtain
$$E\Big\{\big\|f_1(x,u,w)\big\|^2\Big\}=\|m\|^2+\operatorname{tr}\big(P-f_2(P,u)\big).$$
Combining the preceding two equations yields,
\begin{equation}\label{eq:weight_kept}
    \begin{aligned}
E\big\{v\big(f(x,u,w)\big)\big\}&=1+\|m\|^2+\operatorname{tr}\big(P-f_2(P,u)\big)+\operatorname{tr}\big(f_2(P,u)\big) \\
&=1+\|m\|^2+\operatorname{tr}(P)=v(x).
\end{aligned}
\end{equation}

For every $J\in\mathcal{B}_+(X)$, in view of the definition~\eqref{eq:def_vspace}, we have
$$0\leq J\big(f(x,u,w)\big)\le \|J\|_v\,v\big(f(x,u,w)\big),\qquad \forall x\in X,\,u\in U,\,w\in W.$$
Taking conditional expectation on both sides with respect to $w$, and using Eq.~\eqref{eq:weight_kept} yields the desired relation \eqref{eq:bounded_future_cost}.
\end{proof}

The weight-preservation identity has a simple Bayesian interpretation. Since $E\{\|\theta\|^2\,|\, x\}=\|m\|^2+\operatorname{tr}(P)$, the weight $v(x)$ is one plus the posterior second moment of $\theta$. Thus, Prop.~\ref{prop:weight_preservation} states that this posterior second moment is preserved in expectation under the belief-state update.

\subsection{Operator Properties on the Weighted Space}
We consider two Bellman operators. For a policy $\mu\in\mathcal M$, the \emph{$\mu$-Bellman operator} $T_\mu$ is defined by
\begin{equation}
    \label{eq:t_t_mu}
    (T_\mu J)(x)=g\big(x,\mu(x)\big)+\alpha E\Big\{J\Big(f\big(x,\mu(x),w\big)\Big)\Big\},\qquad \forall x\in X.
\end{equation}
The \emph{Bellman operator} $T$ is defined by
$$(TJ)(x)=\inf_{u\in U}\Big[g(x,u)+\alpha E\big\{J(f(x,u,w))\big\}\Big],
\qquad \forall x\in X,$$
where the expectation is finite by Prop.~\ref{prop:weight_preservation}. Since $\mathcal M$ is the set of functions that map $X$ to $U$, we have
$$(TJ)(x)=\inf_{\mu\in\mathcal M}(T_\mu J)(x),\qquad \forall J\in \mathcal{B}_+(X),\,x\in X.$$

An important property that holds for $T_\mu$ and $T$ is that for all $J,\tilde J\in \mathcal{B}_+(X)$, if $J(x)\leq \tilde J(x)$ for all $x\in X$, then
\begin{equation}
    \label{eq:monotone}
    (T_\mu J)(x)\leq (T_\mu \tilde J)(x), \quad (TJ)(x)\leq (T\tilde J)(x),\quad \forall x\in X,\mu\in \mathcal M.
\end{equation}
This property is referred to as the \emph{monotonicity} property of the operators $T_\mu$ and $T$, and will play an important role in our analysis. In our terminology, all equations and inequalities are meant to be pointwise. Accordingly, the monotonicity property can be stated succinctly as
$$J\leq \tilde J\ \ \text{implies}\ \ T_\mu J\leq T_\mu \tilde J,\ \ \forall \mu\in \mathcal M, \ \ TJ\leq T\tilde J.$$

To ensure that $T_\mu$ maps $\mathcal B_+(X)$ into itself, we require the stage cost generated by $\mu$ to have the same growth bound as the functions in $\mathcal B_+(X)$. In particular, we call a policy $\mu$ \emph{proper} if there exists a constant $c_{\mu}>0$ such that
$$g(x,\mu(x))\le c_\mu v(x),\qquad x\in X.$$
Equivalently, this means that the function $g\big(\cdot,\mu(\cdot)\big)$, when viewed as a function of $x$, belongs to $\mathcal{B}_+(X)$. We denote the set of proper policies by $\mathcal M_v$; i.e.,
$$\mathcal M_v= \left\{\mu\in \mathcal M\,|\,\mu \text{ is proper} \right\}.$$

In what follows, we show that $\mathcal M_v$ is nonempty.

\begin{proposition}[Existence of a Proper Policy]
\label{prop:proper}
Suppose Assumption~\ref{ass:prb_set} holds. There exists at least one proper policy, i.e., $\mathcal M_v\neq \emptyset$.
\end{proposition}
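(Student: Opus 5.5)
The plan is to exhibit an explicit proper policy, and the simplest candidate that does not depend on any structure of $U$ is a \emph{constant} policy. The natural-looking choice $\mu=\nu$ is not usable in general, because $\nu(x)$ need not lie in the control space $U$, which is only assumed to be a nonempty subset of $\Re^n$. So I will fix an arbitrary $\bar u\in U$, which exists since $U\neq\emptyset$, and define $\mu(x)=\bar u$ for all $x\in X$. This gives $\mu\in\mathcal M$, and it remains to find $c_\mu>0$ with $g(x,\bar u)\le c_\mu v(x)$ for all $x$.

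To bound the stage cost, let $\lambda_R$ denote the largest eigenvalue of $R$. Then $(u-\nu)'R(u-\nu)\le\lambda_R\|u-\nu\|^2$. Using $\|a-b\|^2\le 2\|a\|^2+2\|b\|^2$, I will write
\[
g(x,\bar u)\le 2\lambda_R\|\bar u\|^2+2\lambda_R\|\nu(x)\|^2+\psi(x).
\]
I then apply Assumption~\ref{ass:prb_set} to the last two terms, which yields $2\lambda_R c_\nu v(x)+c_\psi v(x)$. For the constant term, I use the fact that $v(x)\ge 1$ for all $x\in X$, since $\|m\|^2\ge 0$ and $\operatorname{tr}(P)\ge 0$ for $P\in S_+^n$. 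This gives $2\lambda_R\|\bar u\|^2\le 2\lambda_R\|\bar u\|^2 v(x)$. Hence $c_\mu=2\lambda_R(\|\bar u\|^2+c_\nu)+c_\psi>0$ works.

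Nonnegativity of $g$ is immediate from $R\succ 0$ and $\psi\ge 0$. Therefore $g(\cdot,\mu(\cdot))\in\mathcal B_+(X)$ and $\mu\in\mathcal M_v$. The only step requiring care is the observation that $\nu$ itself may be infeasible, which is why I use a constant feasible control. Absorbing the constant $\|\bar u\|^2$ into the weight via $v\ge 1$ is the one detail that keeps the argument uniform over $X$.
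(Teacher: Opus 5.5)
Your proposal is correct and follows essentially the same route as the paper's proof in Appendix~\ref{app:proper}. The paper also takes a constant policy $\mu_c(x)\equiv u_0\in U$, bounds the quadratic term by $2\lambda\|u_0\|^2+2\lambda\|\nu(x)\|^2$ with $\lambda$ the largest eigenvalue of $R$, and absorbs the constant via $v\ge 1$, arriving at the same $c_{\mu_c}=2\lambda\|u_0\|^2+2\lambda c_\nu+c_\psi$.
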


The detailed proof is given in Appendix~\ref{app:proper}, and we provide here a short explanation. Consider a constant policy $\mu(x)\equiv u_0$, where $u_0$ is some arbitrary element in $U$. By Assumption~\ref{ass:prb_set}, both $\|\nu(x)\|^2$ and $\psi(x)$ are bounded by constant multiples of $v(x)$. Since $u_0$ is fixed and $v(x)\geq 1$, the quadratic term $\bigl(u_0-\nu(x)\bigr)'R\bigl(u_0-\nu(x)\bigr)$ is also bounded by a constant multiple of $v(x)$. Hence for all $x\in X$, $g\bigl(x,\mu_c(x)\bigr)\leq c_{\mu_c}v(x)$ for some $c_{\mu_c}>0$, so that $\mu_c\in\mathcal M_v$.

The following two examples illustrate the nonemptiness of $\mathcal M_v$ for both bounded and unbounded control spaces.

\begin{example}
\label{eg:bounded_U}
Suppose that $U$ is bounded. Then every  policy $\mu:X\mapsto U$ is proper. Indeed, since $U$ is bounded, there exists $c_u>0$ such that $\|\mu(x)\|^2\le c_u$ for all $x\in X$. By Assumption~\ref{ass:prb_set}, repeating the proof in Prop.~\ref{prop:proper}, with the constant input $u_0$ replaced by any  policy $\mu(x)$, gives 
\begin{equation*}
    \begin{aligned}
    g(x,\mu(x))\le
    \Bigl(
    2\lambda c_u
    +
    2\lambda c_\nu
    +
    c_\psi
    \Bigr)v(x), \qquad \forall x\in X,
\end{aligned}
\end{equation*}
where $\lambda $ denotes the maximum eigenvalue of $R$. Hence $\mu\in\mathcal M_v$.
\end{example}

\begin{example}
\label{eg:unbounded_U}
Suppose that $U=\Re^n$. Then the reference control law $\nu(x)$ is admissible and proper. Indeed, by Assumption~\ref{ass:prb_set}, there exists $c_\psi$ such that,
\begin{equation*}
g(x, \nu(x)) =
    \bigl(\nu(x)-\nu(x)\bigr)'R\bigl(\nu(x)-\nu(x)\bigr)
    +
    \psi(x)= \psi(x) \le c_{\psi}v(x), \quad \forall x\in X.
\end{equation*}
Thus, $\nu\in\mathcal M_v$, and $\mathcal M_v\neq \emptyset$ holds.
\end{example}

\subsubsection{Self-Map Properties}
The property of properness provides a growth bound on the stage cost of a policy, based on which we can show that both $T_\mu$ and $T$ map $\mathcal{B}_+(X)$ to $\mathcal{B}_+(X)$. 

\begin{proposition}
\label{prop:self_map}
Let Assumption~\ref{ass:prb_set} hold.
\begin{itemize}
    \item[(a)] Suppose $\mu$ is a proper policy. Then its $\mu$-Bellman operator $T_\mu$ is a self-map on $\mathcal{B}_+(X)$, i.e., $T_{\mu}J \in \mathcal{B}_+(X)$ for all $J\in \mathcal{B}_+(X)$. 
    \item[(b)] The Bellman operator  $T$ is a self-map on $\mathcal{B}_+(X)$, i.e., $TJ \in \mathcal{B}_+(X)$ for all $J\in \mathcal{B}_+(X)$. 
\end{itemize}
\end{proposition}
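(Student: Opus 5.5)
For part (a), I would combine the properness bound with Prop.~\ref{prop:weight_preservation}. Fix a proper policy $\mu$ with constant $c_\mu$ and take any $J\in\mathcal B_+(X)$. Since $g\ge 0$ and, by Eq.~\eqref{eq:bounded_future_cost}, $E\{J(f(x,\mu(x),w))\}\ge 0$, the function $T_\mu J$ is nonnegative and real-valued. The finiteness of the expectation follows from the same inequality. For the upper bound, properness and Eq.~\eqref{eq:bounded_future_cost} give
\begin{equation*}
(T_\mu J)(x)\le c_\mu v(x)+\alpha\|J\|_v v(x)=\bigl(c_\mu+\alpha\|J\|_v\bigr)v(x),\qquad \forall x\in X,
\end{equation*}
so $\|T_\mu J\|_v\le c_\mu+\alpha\|J\|_v<\infty$. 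Hence $T_\mu J\in\mathcal B_+(X)$.

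For part (b), nonnegativity is immediate: each term inside the infimum is nonnegative, so $(TJ)(x)\ge 0$, and in particular $(TJ)(x)>-\infty$. For the upper bound I would use the nonemptiness of $\mathcal M_v$ from Prop.~\ref{prop:proper}. Pick some $\bar\mu\in\mathcal M_v$; for instance, the constant policy $\bar\mu(x)\equiv u_0$ from Appendix~\ref{app:proper} works. Since $(TJ)(x)=\inf_{\mu\in\mathcal M}(T_\mu J)(x)$, we have $0\le TJ\le T_{\bar\mu}J$ pointwise. Part (a) then gives $(TJ)(x)\le (c_{\bar\mu}+\alpha\|J\|_v)v(x)$, which implies $TJ\in\mathcal B_+(X)$ with $\|TJ\|_v\le c_{\bar\mu}+\alpha\|J\|_v$.

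The main subtlety is in part (b). An arbitrary control $u$ may give a stage cost that grows faster than $v$, or that is unbounded over $U$ when $U$ is unbounded. A bound uniform in $u$ is therefore unavailable, and one must instead compare against a single proper policy, which is exactly what Prop.~\ref{prop:proper} provides. Beyond that, the only checks are that the expectations are finite, which holds because $W$ is countable and Eq.~\eqref{eq:bounded_future_cost} applies, and that $TJ$ is real-valued.
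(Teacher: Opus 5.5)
Your proposal is correct and follows essentially the same route as the paper. In part (a) you combine the properness bound with the weight-preservation bound \eqref{eq:bounded_future_cost} to obtain $\|T_\mu J\|_v\le c_\mu+\alpha\|J\|_v$. In part (b) you bound $TJ$ between $0$ and $T_{\bar\mu}J$ for a proper policy $\bar\mu$ supplied by Prop.~\ref{prop:proper}, and then invoke the bound from (a).
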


Proof of this proposition is given in Appendix~\ref{app:self_map}, and we provide a brief explanation here. The main idea follows directly from the weight-preservation property in Prop.~\ref{prop:weight_preservation}. For any $J\in\mathcal B_+(X)$,
$$0\leq E\Big\{J\Big(f\big(x,u,w\big)\Big)\Big\} \leq
\|J\|_v v(x), \qquad \forall (x,u)\in X\times U.$$
For part~(a), if $\mu\in\mathcal M_v$, properness gives $g\big(x,\mu(x)\big)\leq c_\mu v(x).$ Hence both terms in $T_\mu J$ are nonnegative and bounded by constant multiples of $v(x)$, so that $T_\mu J\in\mathcal B_+(X)$.

For part~(b), Prop.~\ref{prop:proper} guarantees the existence of a proper policy $\mu\in\mathcal M_v$. Evaluating the expression defining $TJ$ at $u=\mu(x)$ gives an upper bound for $TJ$ of the same form, while nonnegativity of $g$ and $J$ gives $TJ\geq 0$. Thus $TJ\in\mathcal B_+(X)$.

In view of the self-map property, the operators $T_\mu$ and $T$ can be applied repeatedly. For $k\geq 0$, let $T_\mu^k$ and $T^k$ denote the result of $k$ fold compositions of $T_\mu$ and $T$, respectively. By convention, for all $J\in\mathcal B_+(X)$, we set
$T_\mu^0J=T^0J=J$.

\subsubsection{Contraction Properties}\label{sec:contraction}

Having established that the Bellman operators are self-maps on $\mathcal B_+(X)$, we next show that, they are contractions under the weighted sup-norm.

\begin{proposition}
\label{prop:contraction}
Suppose that  Assumption~\ref{ass:prb_set} holds.
\begin{itemize}
    \item[(a)] For every $\mu\in\mathcal M_v$, the $\mu$-Bellman operator $T_{\mu}:\mathcal{B}_+(X)\mapsto\mathcal{B}_+(X)$ satisfies 
\begin{equation*}
\|T_{\mu}J-T_{\mu}\tilde J\|_v\le \alpha \|J-\tilde J\|_v,
\quad \forall J,\tilde J\in\mathcal{B}_+(X).
\end{equation*}

    \item[(b)] The Bellman operator $T:\mathcal{B}_+(X)\mapsto\mathcal{B}_+(X)$ satisfies
\begin{equation*}
\|TJ-T\tilde J\|_v\le \alpha \|J-\tilde J\|_v,
\quad \forall J,\tilde J\in\mathcal{B}_+(X).
\end{equation*}
\end{itemize}
\end{proposition}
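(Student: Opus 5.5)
The plan is to prove both parts directly from the weight-preservation identity in Prop.~\ref{prop:weight_preservation}, extended from nonnegative functions to signed differences. By Prop.~\ref{prop:self_map}, both $T_\mu J$ and $T\tilde J$ lie in $\mathcal B_+(X)$, so every quantity below is finite. Fix $J,\tilde J\in\mathcal B_+(X)$. Their difference $J-\tilde J$ lies in $\mathcal B(X)$ and satisfies
\[
|J(y)-\tilde J(y)|\le \|J-\tilde J\|_v\, v(y)\quad\text{for all } y\in X .
\]
Hence, for any $(x,u)$, taking the countable sum over $w\in W$ (absolutely convergent, since both $E\{J(f)\}$ and $E\{\tilde J(f)\}$ are finite) and using Eq.~\eqref{eq:weight_pre} gives
\[
\bigl|E\{J(f(x,u,w))\}-E\{\tilde J(f(x,u,w))\}\bigr|\le \|J-\tilde J\|_v\,E\{v(f(x,u,w))\}=\|J-\tilde J\|_v\,v(x).
\]
This single inequality is the key step. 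Note that the modulus of contraction is exactly $\alpha$, with no extra constant, precisely because of the equality in Eq.~\eqref{eq:weight_pre}.

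For part (a), fix $\mu\in\mathcal M_v$. The stage cost $g(x,\mu(x))$ is finite and cancels in $(T_\mu J)(x)-(T_\mu\tilde J)(x)$. Applying the inequality above with $u=\mu(x)$ yields
\[
|(T_\mu J)(x)-(T_\mu\tilde J)(x)|\le \alpha\|J-\tilde J\|_v\, v(x).
\]
Dividing by $v(x)>0$ and taking the supremum over $x$ gives the claim.

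For part (b), I would use the standard argument for infima. For every $u\in U$, the inequality above gives
\[
g(x,u)+\alpha E\{J(f(x,u,w))\}\le g(x,u)+\alpha E\{\tilde J(f(x,u,w))\}+\alpha\|J-\tilde J\|_v\,v(x).
\]
Taking the infimum over $u$ on both sides yields
\[
(TJ)(x)\le (T\tilde J)(x)+\alpha\|J-\tilde J\|_v\, v(x).
\]
Here both infima are finite, since $TJ$ and $T\tilde J$ belong to $\mathcal B_+(X)$ by Prop.~\ref{prop:self_map}(b). Exchanging the roles of $J$ and $\tilde J$ gives the reverse inequality. Dividing by $v(x)$ and taking the supremum completes the proof.

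The main point requiring care is that $g(x,u)$ may be unbounded in $u$ over the unbounded set $U$, so one should not subtract quantities that might be infinite. In the argument above the stage cost is always finite for fixed $(x,u)$, and the infima are finite by the self-map property, so no $\infty-\infty$ issue arises. The other point to verify is that the expectations over the countable support $W$ may be split and recombined linearly, which holds because each term is finite by Eq.~\eqref{eq:bounded_future_cost}.
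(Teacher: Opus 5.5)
Your proposal is correct and follows essentially the same route as the paper. Both arguments bound the difference of expected future costs by $\|J-\tilde J\|_v\,E\{v(f(x,u,w))\}=\|J-\tilde J\|_v\,v(x)$, cancel the stage cost for $T_\mu$, and for $T$ derive the one-sided bound, take the infimum, and then swap the roles of $J$ and $\tilde J$. The only cosmetic difference is that the paper takes the infimum over $\mu\in\mathcal M$ and you take it over $u\in U$, which is equivalent.
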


The contraction property also follows from Prop.~\ref{prop:weight_preservation}. For $J,\tilde J\in\mathcal B_+(X)$, 
$$|J(x)-\tilde J(x)|\leq\|J-\tilde J\|_vv(x),\qquad x\in X.$$
Hence, using $E\{v(f(x,u,w))\}=v(x)$, the difference between the expected future-cost terms is bounded by $\alpha\| J-\tilde J\|_v$. For $T_\mu$, the stage-cost terms cancel. For $T$, the same bound holds uniformly over $u\in U$, and therefore also after taking the infimum over $u$. Hence, both operators are contractions with modulus $\alpha$. Detailed proofs are given in Appendix~\ref{app:contraction}.

Since $\mathcal{B}_+(X)$ is complete under $\|\cdot\|_v$, Banach fixed-point theorem implies that $T$ has a unique fixed point, denoted by $\bar J$, satisfying $\bar J=T\bar J$. More details about Banach fixed-point theorem are given in Appendix~\ref{app:classical} Prop.~\ref{prop:banach}.

Let us also mention an important consequence of monotonicity and the contraction properties of the operators $T_\mu$ and $T$, which we will use frequently in our algorithmic analysis in Section~\ref{sec:vi_algo}. It is called the \emph{weight shift} property, which states that for all $J\in \mathcal{B}_+(X)$ and nonnegative scalar $c$, we have
\begin{equation}
    \label{eq:weight_shift}
    T_\mu(J+cv)\leq T_\mu J+\alpha cv,\quad\mu\in \mathcal M_v,\qquad T(J+cv)\leq T J+\alpha cv,
\end{equation}
where $(J+cv)(x)=J(x)+cv(x)$ for all $x$.\footnote{In fact, due to Eq.~\eqref{eq:weight_pre}, the stronger result holds
$$T_\mu(J+cv)=T_\mu J+\alpha cv,\qquad T(J+cv)=T J+\alpha cv,$$
where $\mu$ need not be proper; cf. Prop.~\ref{prop:weight_preservation}. For our purpose, however, the inequalities~\eqref{eq:weight_shift} suffice.}

\subsection{Equivalence Between Properness and Weighted Boundedness}
\label{subsec:proper_policy_weighted_value}
The preceding results use properness as a growth condition on the stage cost to ensure that the corresponding $\mu$-Bellman operator is well defined on $\mathcal B_+(X)$. The next proposition shows that properness is equivalent to the corresponding policy cost function $J_\mu$ belonging to $\mathcal B_+(X)$.

\begin{proposition}\label{prop:proper_value_equivalence}
A policy $\mu$ is proper, i.e., $\mu\in \mathcal M_v$, if and only if its cost function $J_\mu$ belongs to $\mathcal B_+(X)$.
\end{proposition}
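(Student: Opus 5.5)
The plan is to prove the two directions separately, using only nonnegativity of $g$, the definition~\eqref{eq:cost_f_2i} of $J_\mu$, and the weight-preservation identity of Prop.~\ref{prop:weight_preservation}.

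\emph{The "if" direction.} Suppose $J_\mu\in\mathcal B_+(X)$. In the sum defining $J_\mu(x_0)$, the $k=0$ term is the deterministic quantity $g\big(x_0,\mu(x_0)\big)$, and every other term is nonnegative. Hence, for every $N\ge1$, the $N$-stage expected cost is at least $g\big(x_0,\mu(x_0)\big)$. Passing to the limit gives $g\big(x_0,\mu(x_0)\big)\le J_\mu(x_0)\le \|J_\mu\|_v\,v(x_0)$ for all $x_0\in X$. So $\mu$ is proper with $c_\mu=\max\{\|J_\mu\|_v,1\}$; the maximum with $1$ only guards against $\|J_\mu\|_v=0$.

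\emph{The "only if" direction.} Suppose $\mu\in\mathcal M_v$, so that $g\big(x,\mu(x)\big)\le c_\mu v(x)$. I first show by induction on $k$ that the closed-loop states satisfy $E\{v(x_k)\}=v(x_0)$ for every $k\ge0$, given $x_0$.
\begin{itemize}
\item The case $k=0$ is trivial.
\item For the inductive step, write $x_{k+1}=f\big(x_k,\mu(x_k),w_k\big)$ with $w_k$ independent of $(w_0,\dots,w_{k-1})$.
\item Since $W$ is countable, $x_k$ takes countably many values. The expectation $E\{v(x_{k+1})\}$ is therefore a countable sum of nonnegative terms, which we may regroup by conditioning on $(w_0,\dots,w_{k-1})$ (Tonelli).
\item By Eq.~\eqref{eq:weight_pre} applied at $(x_k,\mu(x_k))$, the inner conditional expectation equals $v(x_k)$. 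Hence $E\{v(x_{k+1})\}=E\{v(x_k)\}=v(x_0)$.
\end{itemize}
It then follows that $E\{g(x_k,\mu(x_k))\}\le c_\mu E\{v(x_k)\}=c_\mu v(x_0)$. Summing with the discount factors gives, for every $N$,
\[
0\le E\Big\{\sum_{k=0}^{N-1}\alpha^k g\big(x_k,\mu(x_k)\big)\Big\}\le \frac{c_\mu}{1-\alpha}\,v(x_0).
\]
Letting $N\to\infty$ shows that $J_\mu$ is finite, nonnegative, and satisfies $\|J_\mu\|_v\le c_\mu/(1-\alpha)$, so $J_\mu\in\mathcal B_+(X)$.

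An alternative route for this direction is to show $J_\mu=\lim_k T_\mu^k 0$ and then invoke Props.~\ref{prop:self_map}(a) and \ref{prop:contraction}(a). The direct bound above avoids having to identify that limit.

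The only point requiring care, and the one I expect to be the main obstacle, is justifying the iterated expectation along the closed-loop trajectory. This is exactly where the countable support of the innovation is used: all expectations become countable sums of nonnegative terms, so interchanging sums is legitimate by Tonelli, and no measurability of $\mu$ is needed. Finiteness of $J_\mu$, which is required for membership in $\mathcal B(X)$, comes out of the same bound.
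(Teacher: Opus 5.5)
Your proof is correct and follows essentially the same approach as the paper. The ``only if'' direction is the same weight-preservation/tower-property induction giving $J_\mu\le \frac{c_\mu}{1-\alpha}v$. In the ``if'' direction you read $g(x_0,\mu(x_0))\le J_\mu(x_0)$ directly off the $k=0$ term of the defining sum, whereas the paper derives the same inequality from the policy-evaluation equation of Prop.~\ref{prop:classical}(a) together with $E\{J_\mu(f(x,\mu(x),w))\}\ge 0$. This is a slightly more self-contained route to the identical bound, and your choice $c_\mu=\max\{\|J_\mu\|_v,1\}$ correctly handles the degenerate case $J_\mu\equiv 0$, which the paper's requirement $c_\mu>0$ leaves implicit.
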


\begin{proof}
We first show that if $J_{\mu}\in\mathcal B_+(X)$, then $\mu \in \mathcal M_v$. Since $J_\mu$ is the cost function of the policy $\mu$, it satisfies the policy evaluation equation:
$$J_{\mu}(x)=g\big(x,\mu(x)\big) +\alpha E\Big\{ J_{\mu}\big(f\big(x,\mu(x),w\big)\big)\Big\},\qquad \forall x\in X;$$
cf. Prop.~\ref{prop:classical}(a). Because $J_\mu\in\mathcal B_+(X)$, we have
$E\left\{J_\mu(f(x,\mu(x),w))\right\}\ge 0$. This implies that
$$ g(x,\mu(x))\le J_\mu(x)\le\|J_\mu\|_v v(x),\qquad \forall x\in X,$$
where the last inequality follows from $J_\mu\in\mathcal B_+(X)$. Thus we have $\mu\in\mathcal M_v$.

Conversely, by the definition of a proper policy, there exists a constant $c_{\mu}>0$ such that
$$g(x,\mu(x))\le c_{\mu}v(x), \qquad \forall x\in X.$$
Thus, its cost function is
\begin{equation*}
    \begin{aligned}
J_{\mu}(x_0)&=\lim_{N\to\infty}\mathop{E}\left\{\sum_{k=0}^{N-1}\alpha^kg\big(x_k,\mu(x_k)\big)\right\}\\
&\le \lim_{N\to\infty} \mathop{E}\left\{\sum_{k=0}^{N-1}\alpha^k c_{\mu}  v(x_k)\right\}
=\frac{c_{\mu}}{{1-\alpha}} v(x_0),
\end{aligned}
\end{equation*}
where the last equality comes from Prop.~\ref{prop:weight_preservation} and the tower property~\cite[Chapter 34]{billingsley1995probability}. Indeed, for a given $x_0$,
$$E\{v(x_{k+1})\mid x_0\}=E\big\{E\{v(x_{k+1})\mid x_k\}\mid x_0\big\}=E\{v(x_k)\mid x_0\}.$$
Since $E\{v(x_0)\mid x_0\}=v(x_0)$, induction gives $E\{v(x_k)\mid x_0\}=v(x_0)$, $k=0,1,\ldots$. Therefore we have $J_{\mu}\in\mathcal B_+(X)$.
\end{proof}

\section{Bellman's Equation and Optimal Policies}\label{sec:bellman}

The previous section established that the Bellman operator $T$ is a self-map and a contraction on the closed set $\mathcal B_+(X)$. We now show that the fixed point of the operator $T$ is the optimal cost function of the problem formulated in Section~\ref{sec:prb_set}. We first impose the following assumption.

\begin{assumption}\label{ass:close_U} The control space $U$ is closed. Furthermore,  the reference control law $\nu$ in the stage cost~\eqref{eq:cost_def} is continuous on $X$, and the belief-state penalty $\psi$ in the stage cost~\eqref{eq:cost_def} is lower semicontinuous on $X$.
\end{assumption}

Under this assumption and Assumptions~\ref{ass:prb_set}, the fixed point of $T$ can be identified with the optimal cost function. The following proposition summarizes the main properties of the optimal cost function and optimal  policies.

\begin{proposition}
    \label{prop:bellman_proper_policy}
Let Assumptions~\ref{ass:prb_set} and~\ref{ass:close_U} hold. Then:
\begin{itemize}

    \item [(a)] The optimal cost function $J^*$ defined in~\eqref{eq:j_star_def}
belongs to $\mathcal B_+(X)$ and is the unique solution in $\mathcal B_+(X)$
of the Bellman equation $J=TJ$.
    
    \item [(b)] The optimal cost function $J^*$ is lower semicontinuous.

    \item [(c)] There exists an optimal  policy $\mu^*$, and every 
    optimal  policy is proper, i.e., $\mu^*\in\mathcal M_v$.
    
\end{itemize}
\end{proposition}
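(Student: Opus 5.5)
Let $\bar J$ denote the unique fixed point of $T$ in $\mathcal B_+(X)$ given by Prop.~\ref{prop:contraction}(b) and the Banach fixed-point theorem. The plan is to show, in order, that $\bar J$ is lower semicontinuous, that the infimum in $T\bar J$ is attained, that the resulting policy is optimal with cost $\bar J$, and finally that $\bar J = J^*$.

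\textbf{Lower semicontinuity.} I will obtain $\bar J$ as the $\|\cdot\|_v$-limit of VI started from $J_0\equiv 0$, namely $J_k=T^k J_0$. By induction, I will show that each $J_k$ is lower semicontinuous. Suppose $J_k$ is l.s.c. and nonnegative. For each fixed $w\in W$, the map $(x,u)\mapsto f(x,u,w)$ is continuous on $X\times U$, because $\sigma^2+u'Pu\ge\sigma^2>0$. Hence $J_k(f(x,u,w))$ is l.s.c. in $(x,u)$, and by Fatou's lemma the countable nonnegative sum $E\{J_k(f(x,u,w))\}$ is also l.s.c. The stage cost $g$ is l.s.c. by Assumption~\ref{ass:close_U}. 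The key point is that the objective $Q_k(x,u)=g(x,u)+\alpha E\{J_k(f(x,u,w))\}$ is coercive in $u$ locally uniformly in $x$: since $R$ is positive definite and $\nu$ is continuous, $Q_k(x,u)\ge \lambda_{\min}(R)\|u-\nu(x)\|^2$. Thus, for $x$ in a compact neighborhood, the relevant minimizers lie in a compact subset of the closed set $U$. The standard inf-compactness argument (Berge-type: take $x_j\to x$ with near-minimizers $u_j$, extract a convergent subsequence, and use l.s.c. of $Q_k$) then shows that $J_{k+1}(x)=\min_u Q_k(x,u)$ is attained and l.s.c. Weighted-norm convergence gives $\bar J=\sup_k J_k$, since the sequence is monotone nondecreasing ($J_1\ge J_0$ together with monotonicity of $T$). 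A supremum of l.s.c. functions is l.s.c., so $\bar J$ is l.s.c., and the same coercivity argument shows that the infimum in $T\bar J$ is attained. Part (b) then follows once $\bar J=J^*$ is established.

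\textbf{Optimality of $\bar J$.} Pick a minimizer $\mu^*(x)$ in $T\bar J$, so that $T_{\mu^*}\bar J=T\bar J=\bar J$. No measurability issue arises, because $\mathcal M$ consists of all functions and expectations are countable sums. Since $g(x,\mu^*(x))\le \bar J(x)\le\|\bar J\|_v v(x)$, the policy $\mu^*$ is proper, and by Prop.~\ref{prop:contraction}(a) $J_{\mu^*}$ is the unique fixed point of $T_{\mu^*}$ in $\mathcal B_+(X)$ (using Prop.~\ref{prop:proper_value_equivalence} and Prop.~\ref{prop:classical}(a)). Hence $J_{\mu^*}=\bar J$, which gives $J^*\le\bar J$. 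For the reverse inequality, take any $\pi=\{\mu_0,\mu_1,\dots\}$. Nonnegativity gives
$$J_\pi\ge \lim_N T_{\mu_0}\cdots T_{\mu_{N-1}}J_0\ge \lim_N T^N J_0=\bar J,$$
where the last equality is pointwise VI convergence from $J_0=0$ and the middle inequality uses monotonicity of $T_\mu$ on nonnegative functions. This monotonicity holds even for improper $\mu$, with values possibly $+\infty$. So $J^*\ge\bar J$, which proves part (a) together with uniqueness.

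\textbf{Part (c) and the main obstacle.} For (c), any optimal $\mu$ has $J_\mu=J^*\in\mathcal B_+(X)$, so $\mu$ is proper by Prop.~\ref{prop:proper_value_equivalence}. The main obstacle is the attainment and lower-semicontinuity step. Since $U$ is noncompact, it relies on the coercivity bound from $R\succ0$ combined with continuity of $\nu$ and of $f(\cdot,\cdot,w)$, together with a Fatou argument for the countable expectation. I expect to check the near-minimizer subsequence argument carefully.
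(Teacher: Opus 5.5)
Your proof is correct, but it reaches $\bar J=J^*$ and the existence of an optimal policy by a different route than the paper.

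The paper proves the lower-semicontinuity/compact-level-set lemma for the VI iterates $J_k=T^kJ_0$, $J_0\equiv 0$. It then invokes the classical monotone-increase theorem for nonnegative costs (Prop.~\ref{prop:classical}(b)) as a black box. That theorem gives $J_k\to J^*$ pointwise and an optimal stationary policy. The paper then gets $J^*=\bar J$ from uniqueness of pointwise limits, and part (b) from $J^*=\sup_k J_k$.

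You use the same induction only to obtain lower semicontinuity of $\bar J=\sup_k J_k$, and then argue by direct verification:
\begin{itemize}
\item Attainment of the infimum in $T\bar J$ gives $\mu^*$ with $T_{\mu^*}\bar J=\bar J$.
\item Properness of $\mu^*$ follows from $g(x,\mu^*(x))\le\bar J(x)$.
\item The contraction of $T_{\mu^*}$, together with Props.~\ref{prop:proper_value_equivalence} and~\ref{prop:classical}(a), gives $J_{\mu^*}=\bar J$.
\item The finite-horizon comparison $T_{\mu_0}\cdots T_{\mu_{N-1}}J_0\ge T^NJ_0$, valid for improper $\mu_i$ as well, gives $J_\pi\ge\bar J$ for every $\pi$.
\end{itemize}

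In effect, you replace the hard half of the cited theorem ($\lim_k J_k\ge J^*$, which is where compactness is used there) by exhibiting a policy whose cost is $\bar J$. Compactness enters your argument only through attainment of the minimum and lower semicontinuity.

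Your route is self-contained within the contraction framework and avoids the external theorem. It also shows directly that any selection of minimizers in Bellman's equation for $J^*$ is optimal, which the paper does not state. The paper's route is shorter given the cited result, and it gets pointwise VI convergence to $J^*$ without the contraction, using the contraction only to identify the limit.

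A small simplification: for the upper bound you do not need the contraction of $T_{\mu^*}$. Since $T_{\mu^*}\bar J\le\bar J$ and $\bar J\ge 0$, Prop.~\ref{prop:classical}(a) already gives $J_{\mu^*}\le\bar J$.
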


Let us describe the main idea behind the proof for Prop.~\ref{prop:bellman_proper_policy}. Our proof analyzes a sequence of functions $\{J_k\}$ generated by VI algorithms; i.e.,
\begin{equation}
    \label{eq:vi}
    J_{k+1}=TJ_k,\qquad k=0,1,\dots.
\end{equation}
For the functions $J_k$, $k=0,1,\dots$, a classical result, first given in \cite{bertsekas1975monotone,bertsekas1977monotone}, states that if the stage cost is nonnegative, $J_0\equiv0$, and there exists some $\bar k$ such that the level set
\begin{equation}
    \label{eq:compact_level_set}
    U_k(x,\lambda)=\Big\{ u\in U\,  \Big| \, g(x,u)+\alpha E\big\{ J_k\big(f(x,u, w)\big)\big\}\le \lambda\Big\}
\end{equation}
is compact for every $k\geq \bar k$, $x\in X$, and $\lambda\in \Re$, then $J_k(x)$ converges to $ J^*(x)$ for all $x\in X$. Moreover, there exists an optimal stationary policy, cf. Prop.~\ref{prop:classical}(b) in~Appendix~\ref{app:classical}. We will verify that the conditions stated in Prop.~\ref{prop:classical}(b) are satisfied in our problem, implying that $J_k$ converges pointwise to $J^*$. On the other hand, Bellman operator $T$ being a contraction implies that $J_k$ converges to the fixed point of $T$ [cf. Prop.~\ref{prop:contraction}], showing that $J^*$ is the fixed point of $T$.

\begin{proof}
    We first show the following lemma, which, among others, verifies the conditions stated in Prop.~\ref{prop:classical}(b) holds in our problem. The proof is long and has been relegated to Appendix~\ref{app:proof_vi_lsc_level_compact}.

    \begin{lemma}[Lower Semicontinuity and Compact Level Sets]
\label{lem:vi_lsc_level_compact}
Suppose Assumptions~\ref{ass:prb_set} and~\ref{ass:close_U} hold. Let $J_0\equiv 0$ and $J_{k+1}=TJ_k$. Then, for every $k\geq 0$, the function $J_k$ belongs to $\mathcal{B}_+(X)$ and is lower semicontinuous. Moreover, for every $k\geq 0$, $x\in X$ and $\lambda\in \Re$, the level set $U_k(x,\lambda)$ defined in Eq.~\eqref{eq:compact_level_set} is compact.
\end{lemma}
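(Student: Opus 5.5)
We argue by induction on $k$. For $k=0$, $J_0\equiv 0$ is trivially in $\mathcal B_+(X)$ and lower semicontinuous. Membership $J_k\in\mathcal B_+(X)$ for all $k$ follows at once from the self-map property of $T$ in Prop.~\ref{prop:self_map}(b), so the real work is lower semicontinuity of $J_k$ and compactness of the level sets $U_k(x,\lambda)$. We organize the induction around the function
$$Q_k(x,u)=g(x,u)+\alpha E\big\{J_k\big(f(x,u,w)\big)\big\},$$
so that $J_{k+1}(x)=\inf_{u\in U}Q_k(x,u)$ and $U_k(x,\lambda)=\{u\in U\mid Q_k(x,u)\le\lambda\}$. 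The inductive step has three parts: (i) if $J_k$ is lower semicontinuous, then $Q_k$ is lower semicontinuous on $X\times U$; (ii) the level sets of $Q_k$ are compact, and indeed bounded locally uniformly in $x$; (iii) these two facts give lower semicontinuity of $J_{k+1}$.

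\emph{Step (i).} The stage cost $g$ is lower semicontinuous on $X\times U$. Its quadratic term is continuous because $\nu$ is continuous (Assumption~\ref{ass:close_U}), and $\psi$ is lower semicontinuous by the same assumption. For the expectation term, fix $w\in W$. The map $(x,u)\mapsto f(x,u,w)$ is continuous at every point where $\sigma^2+u'Pu>0$, which holds everywhere since $\sigma^2>0$ and $P\succeq 0$. Hence $(x,u)\mapsto J_k(f(x,u,w))$ is lower semicontinuous and nonnegative. Because $W$ is countable, $E\{J_k(f(x,u,w))\}=\sum_{w}p(w)J_k(f(x,u,w))$. For a sequence $(x_j,u_j)\to(x,u)$, Fatou's lemma for sums (nonnegative terms) gives $\liminf_j \sum_w p(w)J_k(f(x_j,u_j,w))\ge \sum_w p(w)J_k(f(x,u,w))$. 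So $Q_k$ is lower semicontinuous.

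\emph{Step (ii).} Since $J_k\ge 0$ and $R\succ 0$,
$$Q_k(x,u)\ge \lambda_{\min}(R)\,\|u-\nu(x)\|^2 .$$
Therefore $U_k(x,\lambda)\subset\{u\mid \|u-\nu(x)\|^2\le \lambda/\lambda_{\min}(R)\}$, which is bounded. It is also closed, being a sublevel set of the lower semicontinuous function $Q_k(x,\cdot)$ intersected with the closed set $U$. Hence $U_k(x,\lambda)$ is compact. Continuity of $\nu$ makes this bound locally uniform: for $x'$ in a neighborhood of $x$, the set $\{u\mid Q_k(x',u)\le\lambda\}$ lies in a fixed compact ball around $\nu(x)$.

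\emph{Step (iii).} This is the main obstacle, since an infimum of lower semicontinuous functions over a noncompact $U$ need not be lower semicontinuous. We use the level-set bound from (ii). Take $x_j\to x$ with $\liminf_j J_{k+1}(x_j)=\lambda<\infty$, pass to a subsequence attaining this liminf, and choose $u_j\in U$ with $Q_k(x_j,u_j)\le J_{k+1}(x_j)+1/j$. Eventually $Q_k(x_j,u_j)\le\lambda+1$, so by (ii) the $u_j$ lie in a fixed compact ball. Extract a convergent subsequence $u_j\to\bar u$; then $\bar u\in U$ because $U$ is closed. Lower semicontinuity of $Q_k$ gives
$$J_{k+1}(x)\le Q_k(x,\bar u)\le\liminf_j Q_k(x_j,u_j)\le\lambda .$$
Hence $J_{k+1}$ is lower semicontinuous, which completes the induction. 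The compactness claim at level $k$ uses only the lower semicontinuity of $J_k$, so it holds for every $k\ge0$.
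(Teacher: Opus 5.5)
Your proposal is correct and follows essentially the same route as the paper. Both proofs run an induction organized around $Q_k$: lower semicontinuity of $Q_k$ via continuity of $f$ and Fatou's lemma, compactness of $U_k(x,\lambda)$ from the bound $Q_k(x,u)\ge\lambda_{\min}(R)\|u-\nu(x)\|^2$ together with closedness of $U$, and lower semicontinuity of $J_{k+1}$ by extracting a convergent subsequence of minimizers along a liminf-attaining subsequence. The only differences are cosmetic. You use $1/j$-approximate minimizers where the paper uses exact minimizers, which exist because the level sets are compact. You also state explicitly the local uniform boundedness that follows from continuity of $\nu$; the paper uses this only implicitly when it asserts $g(x_\ell,u_\ell)\to+\infty$ for unbounded $\{u_\ell\}$.
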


Given that Lemma~\ref{lem:vi_lsc_level_compact} hold, we have that the conditions stated in Prop.~\ref{prop:classical}(b) are satisfied. As a result, the sequence $\{J_k\}$ stated in Lemma~\ref{lem:vi_lsc_level_compact} converges pointwise to $J^*$, and there exists a stationary optimal policy.

(a) By Prop.~\ref{prop:contraction}, the Bellman operator $T$ is a contraction on $\mathcal B_+(X)$. As a result, it admits a unique fixed point $\bar J$ in $\mathcal B_+(X)$, and $\|J_k-\bar J\|_v\to0$, where $J_{k+1}=TJ_k$, and $J_0\equiv 0$. This implies that $J_k(x)\to \bar J(x)$ for all $x\in X$. On the other hand, the same sequence $\{J_k\}$ satisfies $J_k(x)\to J^*(x)$ for all $x\in X$. Thus, by uniqueness of pointwise limits, we have $ \bar J = J^*$.

(b) We first note that the sequence $\{J_k\}$ considered in Lemma~\ref{lem:vi_lsc_level_compact} is monotonically increasing in the sense that $J_{k+1}\geq J_k$, $k=0,1,\dots$. Indeed, with the stage cost $g(x,u)$ being nonnegative, we have $J_1=TJ_0\geq J_0$ as $J_0(x)\equiv0$. Applying $k$-fold Bellman operator $T^k$ on both sides and in view of the monotonocity property of $T$, we obtain $J_{k+1}\geq J_k$. Moreover, since $J_k(x)\to J^*$ for all $x$, we also have $J_k\leq J^*$ for all $k$.

Let $\{x_\ell\}$ be a sequence of states that converges to $x$, where $x$ is an arbitrary state. We have shown in Lemma~\ref{lem:vi_lsc_level_compact} that $J_k$ is lower semicontinuous for all $k$. Therefore,
$$J_k(x)\leq \liminf_{\ell\to\infty}J_k(x_\ell)\leq \liminf_{\ell\to\infty}J^*(x_\ell),\qquad k=0,1,\dots,$$
where the last inequality is due to that $J_k\leq J^*$. Taking limit over $k$ on the left side we obtain
$$J^*(x)\leq \liminf_{\ell\to\infty}J^*(x_\ell).$$
Since $x$ is arbitrary, we have that $J^*$ is lower semicontinuous.

(c) In view of Lemma~\ref{lem:vi_lsc_level_compact} and Prop.~\ref{prop:classical}(b), there exists an optimal stationary policy. Let $\mu^*$ be an arbitrary optimal stationary policy, i.e., $J_{\mu^*}=J^*$. Since $J^*\in\mathcal B_+(X)$, we have $J_{\mu^*}\in\mathcal B_+(X)$. By Proposition~\ref{prop:proper_value_equivalence}, it follows that $\mu^*\in\mathcal M_v$. Therefore, we have $\mu^*$ being proper.

\end{proof}

\section{Value and Policy Iteration Algorithms}\label{sec:vi_algo}
In this section, we develop the convergence analysis of three standard DP algorithms within the weighted sup-norm space framework established above: VI, classical PI, and optimistic PI. We show that the functions computed via VI converge geometrically to the optimal cost function from any initial function in $\mathcal B_+(X)$. For PI and optimistic PI, we further show that the policies generated by the algorithms remain proper and that the corresponding cost-function sequences converge to the optimal cost in the weighted sup-norm. 

Note that the specific structure of the belief-state dynamics is used in Section~\ref{sec:weight} to establish the weighted sup-norm contraction properties of the associated Bellman operators. The convergence analysis developed in this section relies primarily on these contraction properties and monotonicity, rather than the particular belief-state dynamics of the present problem. As a result, the vanishing-error convergence results for VI, PI, and optimistic PI extend to the general contractive DP models studied in \cite[Chapter~2]{bertsekas2022abstract}, where the Bellman operator and all policy-specific Bellman operators are contractions on a common weighted sup-norm space.

\subsection{Value Iteration}
Let us consider the VI algorithm that permits minimization errors. In particular, starting with some function $J_0\in \mathcal B_+(X)$, and it generates a sequence of functions $\{J_k\}$ satisfying 
\begin{equation}
    \label{eq:vi_repeat}
    J_{k+1}\in \mathcal{B}_+(X),\quad\|J_{k+1}-TJ_k\|_v\leq \epsilon_k,\qquad k=0,1,\dots,
\end{equation}
where $\{\epsilon_k\}$ is a sequence of positive scalars. In particular, if $\epsilon_k=0$ for all $k$, we obtain the algorithm  \eqref{eq:vi}. We prove the following result.

\begin{proposition}[Convergence of VI]
\label{pro:vi_arbitrary_init}
Let Assumptions~\ref{ass:prb_set} and~\ref{ass:close_U} hold. If the sequence $\{\epsilon_k\}$ satisfies $\epsilon_k\to0$, then for any $J_0\in \mathcal B_+(X)$, the sequence $\{J_k\}$ computed via the VI algorithm~\eqref{eq:vi_repeat} converges to the optimal cost function $J^*$; i.e.,
$$\lim_{k\to\infty}\|J_k-J^*\|_v=0.$$
Moreover, if $J_{k+1}=TJ_k$ for all $k$, we have
$$\|J_k-J^*\|_v  \le \alpha^k \|J_0 -J^*\|_v,\qquad k=0,1,\dots.$$
\end{proposition}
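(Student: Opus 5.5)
The plan is to combine the contraction property of $T$ from Prop.~\ref{prop:contraction}(b) with the identification $J^*=TJ^*$ and $J^*\in\mathcal B_+(X)$ from Prop.~\ref{prop:bellman_proper_policy}(a). The exact case comes first, because it is immediate. If $J_{k+1}=TJ_k$, then $J_k\in\mathcal B_+(X)$ for all $k$ by Prop.~\ref{prop:self_map}(b), and
$$\|J_{k+1}-J^*\|_v=\|TJ_k-TJ^*\|_v\le\alpha\|J_k-J^*\|_v.$$
Induction on $k$ then gives $\|J_k-J^*\|_v\le\alpha^k\|J_0-J^*\|_v$. All quantities are finite, since both $J_k$ and $J^*$ lie in $\mathcal B_+(X)$.

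For the inexact iteration \eqref{eq:vi_repeat}, set $\delta_k=\|J_k-J^*\|_v$. This is finite because $J_k,J^*\in\mathcal B_+(X)$ by assumption and by Prop.~\ref{prop:bellman_proper_policy}(a). The triangle inequality, the fixed-point property $J^*=TJ^*$, and the contraction bound give
$$\delta_{k+1}\le\|J_{k+1}-TJ_k\|_v+\|TJ_k-TJ^*\|_v\le\epsilon_k+\alpha\delta_k.$$
Unrolling this recursion yields
$$\delta_k\le\alpha^k\delta_0+\sum_{j=0}^{k-1}\alpha^{k-1-j}\epsilon_j.$$

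The only step needing care is to show that the convolution sum tends to zero when $\epsilon_k\to0$. Since a convergent sequence is bounded, let $\bar\epsilon=\sup_j\epsilon_j<\infty$. Fix $\eta>0$ and choose $K$ such that $\epsilon_j\le\eta$ for all $j\ge K$. The sum then splits as
$$\sum_{j<K}\alpha^{k-1-j}\epsilon_j\le\bar\epsilon\,\frac{\alpha^{k-K}}{1-\alpha},\qquad \sum_{K\le j\le k-1}\alpha^{k-1-j}\epsilon_j\le\frac{\eta}{1-\alpha}.$$
The first bound tends to $0$ as $k\to\infty$. Hence $\limsup_k\delta_k\le\eta/(1-\alpha)$ for every $\eta>0$, so $\delta_k\to0$.

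This elementary discrete-Gronwall/Toeplitz step is the only mild obstacle. Everything else follows directly from earlier results in the paper.
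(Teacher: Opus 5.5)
Your proof is correct and is essentially the paper's argument: the paper derives the same error recursion for $\|J_k-T^kJ_0\|_v$, measuring against the exact VI iterates, and then invokes the Banach fixed-point theorem for $T^kJ_0\to J^*$, whereas you compare directly with $J^*=TJ^*$ and obtain the single bound $\|J_k-J^*\|_v\le\alpha^k\|J_0-J^*\|_v+\sum_{j=0}^{k-1}\alpha^{k-1-j}\epsilon_j$, which covers both claims at once. Your splitting argument for the convolution sum also makes explicit the boundedness of the error sequence that the paper's limit-superior step uses tacitly.
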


\begin{proof}
    Since $\{J_k\}$ satisfies the inequality \eqref{eq:vi_repeat}, we have
    $$ \|J_{k}-T^2J_{k-2}\|_v\leq \|J_{k}-TJ_{k-1}\|_v+\|TJ_{k-1}-T^2J_{k-2}\|_v\leq \epsilon_{k-1}+\alpha \epsilon_{k-2}.$$
    Proceeding similarly, we obtain
    $$\|J_{k}-T^kJ_{0}\|_v\leq \eta_{k-1},$$
    where $\eta_{k+1}=\alpha\eta_{k}+\epsilon_{k+1}$, $k=0,1,\dots$, and $\eta_0=\epsilon_0$. Taking limit superior on both sides of the equality $\eta_{k+1}=\alpha\eta_{k}+\epsilon_{k+1}$ and since $\epsilon_k\to0$, we obtain that $\|J_{k}-T^kJ_{0}\|_v\to 0$. We have shown in Section~\ref{sec:contraction} that the Bellman operator $T$ is a self-map and a contraction on the weighted sup-norm space $\mathcal B_+(X)$ with modulus $\alpha$. By applying a variant of the Banach fixed point theorem [cf. Prop.~\ref{prop:banach}], we have $\|J^*-T^kJ_{0}\|_v\to 0$, which implies $\|J_k-J^*\|_v\to0$. 
    
    If $J_{k+1}=TJ_k$ for all $k$, the inequality $\|J_k-J^*\|_v  \le \alpha^k \|J_0 -J^*\|_v$ follows directly from Prop.~\ref{prop:banach}.
\end{proof}

\subsection{Classical Policy Iteration}
We now discuss PI within the proper-policy class $\mathcal M_v$.  Starting from a proper policy $\mu^0\in \mathcal M_v$, PI generates a sequence of  policies $\{\mu^k\}$ through a sequence of policy evaluations to obtain $J_{\mu^k}$ from the equation
\begin{equation}
    \label{eq:pi_pe}
    J_{\mu^k}=T_{\mu^k}J_{\mu^k}.
\end{equation}
We then perform the policy improvement step by computing $\mu^{k+1}$ such that 
\begin{equation}
    \label{eq:pi_pi}
    T_{\mu^{k+1}}J_{\mu^{k}}\leq J_{\mu^k},\qquad T_{\mu^{k+1}}J_{\mu^{k}}\leq TJ_{\mu^{k}}+\epsilon_kv,
\end{equation}
where $\{\epsilon_k\}$ is a sequence of positive scalars, which are design parameters. Note that if $T_{\mu^{k+1}}J_{\mu^{k}}= TJ_{\mu^{k}}$, we obtain the classical PI algorithm. We establish the following convergence result.

\begin{proposition}[Convergence of PI]
Suppose Assumptions~\ref{ass:prb_set} and \ref{ass:close_U} hold. Let the initial policy $\mu^0$ be proper. Then the PI algorithm \eqref{eq:pi_pe}-\eqref{eq:pi_pi} is well-posed in the sense that the policies $\mu^k$, $k=0,1,\dots$, are proper. Moreover, if the sequence $\{\epsilon_k\}$ satisfies $\epsilon_k\to0$, then we have 
$$
\lim_{k\to\infty}\|J_{\mu^k} - J^*\|_v = 0.
$$
\end{proposition}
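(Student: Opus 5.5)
The argument has two parts: properness of every $\mu^k$, by induction, and then weighted sup-norm convergence of $J_{\mu^k}$ to $J^*$ from monotonicity and contraction.

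\textbf{Properness.} The base case is the assumption on $\mu^0$. Suppose $\mu^k\in\mathcal M_v$. By Prop.~\ref{prop:proper_value_equivalence}, $J_{\mu^k}\in\mathcal B_+(X)$, so $TJ_{\mu^k}$ and $T_{\mu^{k+1}}J_{\mu^k}$ are well defined and finite by Prop.~\ref{prop:weight_preservation}. The first inequality in \eqref{eq:pi_pi} gives
$$g\big(x,\mu^{k+1}(x)\big)\le (T_{\mu^{k+1}}J_{\mu^k})(x)\le J_{\mu^k}(x)\le \|J_{\mu^k}\|_v v(x),$$
where the first step uses $J_{\mu^k}\ge0$. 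Hence $\mu^{k+1}\in\mathcal M_v$. Then $T_{\mu^{k+1}}$ is a monotone $\alpha$-contraction on $\mathcal B_+(X)$ by Props.~\ref{prop:self_map}(a) and \ref{prop:contraction}(a). Its unique fixed point is $J_{\mu^{k+1}}$: by Prop.~\ref{prop:proper_value_equivalence} this function lies in $\mathcal B_+(X)$, and it satisfies the policy evaluation equation. Iterating $T_{\mu^{k+1}}$ on $T_{\mu^{k+1}}J_{\mu^k}\le J_{\mu^k}$ with monotonicity, and passing to the limit by Prop.~\ref{prop:banach}, yields
$$J_{\mu^{k+1}}\le T_{\mu^{k+1}}J_{\mu^k}\le J_{\mu^k}.$$
So the cost sequence is monotonically nonincreasing.

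\textbf{Convergence.} Combining the second inequality in \eqref{eq:pi_pi} with the bound just obtained gives
$$J_{\mu^{k+1}}\le T_{\mu^{k+1}}J_{\mu^k}\le TJ_{\mu^k}+\epsilon_k v.$$
Also $J^*\le J_{\mu^{k+1}}$, since $J^*$ is the optimal cost. Set $\delta_k=\|J_{\mu^k}-J^*\|_v$, which is finite because both functions lie in $\mathcal B_+(X)$ (Prop.~\ref{prop:bellman_proper_policy}(a)). Then $0\le J_{\mu^k}-J^*\le \delta_k v$, i.e.\ $J_{\mu^k}\le J^*+\delta_k v$. Applying monotonicity of $T$, the weight shift property \eqref{eq:weight_shift}, and $TJ^*=J^*$:
$$0\le J_{\mu^{k+1}}-J^*\le TJ_{\mu^k}-TJ^*+\epsilon_k v\le (\alpha\delta_k+\epsilon_k)v.$$
This gives the recursion $\delta_{k+1}\le \alpha\delta_k+\epsilon_k$. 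As in the proof of Prop.~\ref{pro:vi_arbitrary_init}, taking limit superior gives $\limsup_k\delta_k\le \alpha\limsup_k\delta_k$. Since $\delta_k\le\delta_0<\infty$ by the monotone decrease, this forces $\delta_k\to0$.

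\textbf{Main obstacle.} The delicate step is establishing $J_{\mu^{k+1}}\le T_{\mu^{k+1}}J_{\mu^k}$. It requires both that $\mu^{k+1}$ is proper, so that $T_{\mu^{k+1}}$ is a contraction on $\mathcal B_+(X)$, and that $J_{\mu^{k+1}}$ is exactly its fixed point there. This is why properness is extracted first from the inequality $T_{\mu^{k+1}}J_{\mu^k}\le J_{\mu^k}$ together with nonnegativity. The identification of $J_{\mu^{k+1}}$ with the fixed point relies on the policy evaluation equation (Prop.~\ref{prop:classical}(a)) and uniqueness in $\mathcal B_+(X)$.
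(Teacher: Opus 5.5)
Your proof is correct and follows essentially the paper's route: you get properness by induction from $T_{\mu^{k+1}}J_{\mu^k}\le J_{\mu^k}$ and nonnegativity, and then you combine the key bound $J_{\mu^{k+1}}\le TJ_{\mu^k}+\epsilon_k v$ with monotonicity of $T$ and the weight shift property. The differences are minor: you derive $J_{\mu^{k+1}}\le T_{\mu^{k+1}}J_{\mu^k}$ from the contraction of $T_{\mu^{k+1}}$ and a monotone limit rather than from the comparison result in Prop.~\ref{prop:classical}(a), and you close with the one-step error recursion $\delta_{k+1}\le\alpha\delta_k+\epsilon_k$ around $J^*$ rather than comparing $J_{\mu^k}$ with the VI iterates $T^kJ_{\mu^0}$, which yields the same bound.
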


\begin{proof}
We first show that the sequence of policies $\mu^k$ are proper. We have that $\mu^0$ is proper by assumption. Suppose that $\mu^k$ is proper. Then Prop.~\ref{prop:proper_value_equivalence} implies that $J_{\mu^k}\in \mathcal B_+(X)$. Since $\mu^{k+1}$ satisfies $T_{\mu^{k+1}}J_{\mu^{k}}\leq J_{\mu^k}$, we obtain 
$$0\leq g\big(x,\mu^{k+1}(x)\big)\leq (T_{\mu^{k+1}}J_{\mu^{k}})(x)\leq J_{\mu^k}(x),\qquad \forall x\in X,$$
where the second inequality is due to that $J_{\mu^{k}}\in \mathcal B_+(X)$. As a result, we have $\mu^{k+1}$ is proper.

In what follows, we will show that 
\begin{equation}
    \label{eq:pi_bound_1}
    J_{\mu^{k+1}}\leq TJ_{\mu^{k}}+\epsilon_kv,\qquad k=0,1,\dots.
\end{equation}
Since $T_{\mu^{k+1}}J_{\mu^{k}}\leq J_{\mu^k}$, applying on both sides the operator $T_{\mu^{k+1}}$ and using its monotonicity property, we have $T_{\mu^{k+1}}(T_{\mu^{k+1}}J_{\mu^{k}})\leq T_{\mu^{k+1}}J_{\mu^{k}}$. Using Prop.~\ref{prop:classical}(a) with $\mu=\mu^{k+1}$ and $\hat J=T_{\mu^{k+1}}J_{\mu^{k}}$, we obtain $J_{\mu^{k+1}}\leq T_{\mu^{k+1}}J_{\mu^{k}}$. Combining this inequality with the second inequality in step \eqref{eq:pi_pi}, we obtain the desired relation \eqref{eq:pi_bound_1}.

Since the inequality \eqref{eq:pi_bound_1} holds for all $k$, we have
\begin{align*}
    J_{\mu^{k}}\leq& TJ_{\mu^{k-1}}+\epsilon_{k-1}v\leq T(TJ_{\mu^{k-2}}+\epsilon_{k-2}v)+\epsilon_{k-1}v\\
    \leq &T^2J_{\mu^{k-2}}+(\alpha \epsilon_{k-2}+\epsilon_{k-1})v,
\end{align*}
where the second inequality is due to monotonicity of $T$ and the inequality $J_{\mu^{k}}\leq TJ_{\mu^{k-1}}+\epsilon_{k-1}v$; cf. inequality~\eqref{eq:pi_bound_1}. The last inequality is due to the weight shift property of $T$; cf. Eq.~\eqref{eq:weight_shift}. Continue in similar way, we obtain
$$J_{\mu^{k}}\leq T^{k}J_{\mu^{0}}+(\alpha^{k-1}\epsilon_0+\alpha^{k-2}\epsilon_1\cdots+\epsilon_{k-1})v,$$
or equivalently,
\begin{equation}
    \label{eq:pi_bound_2}
    J_{\mu^{k}}\leq T^{k}J_{\mu^{0}}+\eta_{k-1}v,
\end{equation}
where $\{\eta_k\}$ is a sequence of nonnegative scalars defined recursively by $\eta_{k+1}=\alpha\eta_k+\epsilon_{k+1}$, $k=0,1,\dots$, and $\eta_0=\epsilon_0$. Combining the inequality~\eqref{eq:pi_bound_2} with the relation $J^*\leq J_{\mu^{k}}$, we obtain
$$\|J_{\mu^{k}}-J^*\|_v\leq \|T^{k}J_{\mu^{0}}-J^*\|_v+\eta_{k-1}.$$

Finally, by taking limit superior on both sides of the equation $\eta_{k+1}=\alpha\eta_k+\epsilon_k$ and using the fact that $\epsilon_k\to0$, we obtain $\eta_k\to0$. Together with the preceding inequality and the fact that $\|T^{k-1}J_{\mu^{0}}-J^*\|_v\to0$, we obtain the desired convergence $\|J_{\mu^k} - J^*\|_v\to0$.
\end{proof}

\subsection{Optimistic Policy Iteration}
Let us consider a variant of PI, known as optimistic PI, where the policies are evaluated approximately. The algorithm starts with some $J_0\in \mathcal B_+(X)$, and generates a sequence of function-policy pairs $\{(J_k, \mu^k)\}$ as follows. Given $J_k$, we compute a policy $\mu^{k}$ such that
\begin{equation}
    \label{eq:opi_pi}
    T_{\mu^k}J_k\leq TJ_k+\epsilon_kv,
\end{equation}
where $\{\epsilon_k\}$ is a sequence of positive scalars that we can select. We then apply $\ell_k\geq1$ iterations of operator $T_{\mu^k}$ on $J_k$, i.e.,
\begin{equation}
\label{eq:opi_pe}
    J_{k+1}=T_{\mu^{k}}^{\ell_k}J_k.
\end{equation}
Thus, $J_{k+1}$ is a finite-iteration approximation of the cost function
$J_{\mu^k}$ associated with policy $\mu^k$. Equivalently, we approximate $J_{\mu^k}(x_0)$ via
$$J_{k+1}(x_0)
    =
    E\Bigg\{
        \sum_{t=0}^{\ell_k-1}
        \alpha^t g\big(x_t,\mu^k(x_t)\big)
        +
        \alpha^{\ell_k}J_k(x_{\ell_k})
    \Bigg\},$$
where $\{x_t\}$ is the sequence of states generated using the policy $\mu^{k}$ via state equation $x_{t+1}=f\big(x_t, \mu^{k}(x_t), w_t\big)$. We have the following convergence result. 

\begin{proposition}[Convergence of Optimistic PI]
Suppose Assumptions~\ref{ass:prb_set} and \ref{ass:close_U} hold. Let the initial function $J_0$ belong to $\mathcal B_+(X)$. Then the optimistic PI algorithm \eqref{eq:opi_pi}-\eqref{eq:opi_pe} is well-posed in the sense that the policies $\mu^k$, $k=0,1,\dots$, are proper. Moreover, if the sequence $\{\epsilon_k\}$ satisfies $\epsilon_k\to0$, then we have 
$$
\lim_{k\to\infty}\|J_k - J^*\|_v = 0,\qquad \lim_{k\to\infty}\|J_{\mu^k} - J^*\|_v = 0.
$$
\end{proposition}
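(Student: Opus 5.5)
The plan is to first check well-posedness (properness of every $\mu^k$, and $J_k\in\mathcal B_+(X)$ for every $k$) by induction. The convergence argument will then be an order-based one that tracks how far $TJ_k$ can exceed $J_k$, measured in units of $v$. A direct contraction estimate of $\|T_{\mu^k}^{\ell_k}J_k-J^*\|_v$ in terms of $\|J_k-J^*\|_v$ gives a modulus like $\alpha+2\alpha/(1-\alpha)$, which need not be below one, so it is avoided. Instead, one-sided bounds are propagated using monotonicity~\eqref{eq:monotone} and the exact weight shift $T_\mu(J+cv)=T_\mu J+\alpha cv$ from the footnote to Eq.~\eqref{eq:weight_shift}. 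That identity holds for any real $c$ by Eq.~\eqref{eq:weight_pre}, as long as all terms stay finite.

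\textbf{Well-posedness.} Suppose $J_k\in\mathcal B_+(X)$. By Prop.~\ref{prop:self_map}(b), $TJ_k\in\mathcal B_+(X)$. Since $J_k\ge0$, Eq.~\eqref{eq:opi_pi} gives
$$0\le g(x,\mu^k(x))\le (T_{\mu^k}J_k)(x)\le (TJ_k)(x)+\epsilon_k v(x)\le (\|TJ_k\|_v+\epsilon_k)v(x),$$
so $\mu^k\in\mathcal M_v$. Prop.~\ref{prop:self_map}(a), applied $\ell_k$ times, then gives $J_{k+1}\in\mathcal B_+(X)$. Since $T_{\mu^k}J_k\ge TJ_k$, we also have $\|T_{\mu^k}J_k-TJ_k\|_v\le\epsilon_k$.

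\textbf{Key recursion.} Let $a_k$ be the smallest scalar, possibly negative, with $TJ_k\le J_k+a_kv$. It is finite because both functions lie in $\mathcal B(X)$. Write $\mu=\mu^k$, $\ell=\ell_k$ and $b_k=a_k+\epsilon_k$. Then $T_\mu J_k\le J_k+b_kv$, and applying $T_\mu^i$ with the exact weight shift gives
$$T_\mu^{i+1}J_k\le T_\mu^iJ_k+\alpha^ib_kv\quad\text{for all } i.$$
Two consequences follow.
\begin{itemize}
\item[(i)] $TJ_{k+1}\le T_\mu J_{k+1}=T_\mu^{\ell}(T_\mu J_k)\le J_{k+1}+\alpha^{\ell}b_kv$. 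Hence $a_{k+1}\le\alpha(a_k+\epsilon_k)^+$. Since $\epsilon_k\to0$, this yields $a_k^+\to0$.
\item[(ii)] Telescoping gives $J_{k+1}\le T_\mu J_k+\frac{b_k^+}{1-\alpha}v\le TJ_k+\gamma_kv$, where $\gamma_k=\epsilon_k+\frac{b_k^+}{1-\alpha}\to0$.
\end{itemize}

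\textbf{Upper and lower bounds.} For the upper side, iterating (ii) with monotonicity and the weight shift of $T$, exactly as in the proofs of Prop.~\ref{pro:vi_arbitrary_init} and of PI convergence, gives
$$J_k\le T^kJ_0+\eta_{k-1}v,\qquad \eta_k\to0.$$
Combined with $\|T^kJ_0-J^*\|_v\to0$, this yields $\limsup_k\sup_x (J_k-J^*)(x)/v(x)\le0$. For the lower side, $T_\mu\ge T$ and monotonicity give $J_{k+1}\ge T^{\ell_k}J_k$. If $J_k\ge J^*-\delta_kv$, then by monotonicity and the weight shift of $T$ we get $J_{k+1}\ge J^*-\alpha^{\ell_k}\delta_kv$. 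Thus $\delta_{k+1}\le\alpha\delta_k$, which goes to $0$ starting from $\delta_0=\|J_0-J^*\|_v$. Together these give $\|J_k-J^*\|_v\to0$.

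\textbf{Policy costs.} Since $\mu^k$ is proper, Prop.~\ref{prop:contraction}(a) and the Banach theorem give $T_{\mu^k}^iJ_k\to J_{\mu^k}$ in $\|\cdot\|_v$. The telescoped bound then gives $J_{\mu^k}\le J_k+\frac{b_k^+}{1-\alpha}v$. Using $J^*\le J_{\mu^k}$ and the first limit, we obtain $\|J_{\mu^k}-J^*\|_v\to0$.

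The main obstacle is the recursion in (i). It is where the exact (possibly negative) weight shift is needed, and where nonmonotone $J_k$, caused by an arbitrary $J_0$, must be handled. I expect to check carefully that the shift identity applies for negative $c$ when $J+cv$ leaves $\mathcal B_+(X)$. If it does not, I would extend the operators to $\mathcal B(X)$ using Eq.~\eqref{eq:weight_pre} directly.
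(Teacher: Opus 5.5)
Your proof is correct, and most of it matches the paper. The properness induction is the same. Your residual recursion ($T_{\mu^k}J_k\le J_k+b_kv$, with $a_{k+1}^+\le\alpha(a_k^++\epsilon_k)$) is the paper's $\xi_k$, $\delta_{k+1}=\alpha^{\ell_k}(\delta_k+\epsilon_k)$. The upper bound $J_k\le T^kJ_0+\eta_{k-1}v$ is the same. Your policy-cost step, passing to the Banach limit in the telescoped inequality, is just the proof of Prop.~\ref{prop:monotone_contr_ineq} for $T_{\mu^k}$. That step uses that $J_{\mu^k}$ is the fixed point of $T_{\mu^k}$ in $\mathcal B_+(X)$, which follows from properness and Prop.~\ref{prop:proper_value_equivalence}.

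The genuinely different step is the lower bound. The paper feeds $TJ_k\le J_k+\delta_kv$ into Prop.~\ref{prop:monotone_contr_ineq} to get $J_k\ge J^*-\frac{\delta_k}{1-\alpha}v$. One residual recursion then controls both sides, but the lower bound decays only because $\epsilon_k\to0$. You instead use $J_{k+1}=T_{\mu^k}^{\ell_k}J_k\ge T^{\ell_k}J_k$. This gives $J^*-J_k\le\alpha^k\|J_0-J^*\|_v\,v$, a geometric one-sided bound independent of the improvement errors, which is sharper than the paper's. To stay inside $\mathcal B_+(X)$, write it as $J^*\le J_k+\delta_kv$, so that $J^*=T^{\ell_k}J^*\le T^{\ell_k}(J_k+\delta_kv)\le T^{\ell_k}J_k+\alpha^{\ell_k}\delta_kv\le J_{k+1}+\alpha^{\ell_k}\delta_kv$. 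This uses only the nonnegative weight shift \eqref{eq:weight_shift}.

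The negative-shift concern in your last paragraph also disappears. Since you pass to positive parts anyway, you can work with $b_k^+$ throughout, as the paper does with $\delta_0=\|TJ_0-J_0\|_v\ge0$. Even if $b_k<0$, the shifted functions satisfy $T_{\mu^k}^iJ_k+\alpha^ib_kv\ge T_{\mu^k}^{i+1}J_k\ge0$. So they stay in $\mathcal B_+(X)$, and the exact identity from Eq.~\eqref{eq:weight_pre} applies.
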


\begin{proof}
    We first show by induction that the policies $\mu^k$, $k=0,1,\dots$, computed by the  optimistic PI algorithm \eqref{eq:opi_pi}-\eqref{eq:opi_pe} are proper, and the functions $J_k$, $k=0,1,\dots$, belong to $\mathcal B_+(X)$. As the initial step, we have $J_0\in \mathcal B_+(X)$ by assumption. Given that $\mu^0$ satisfies inequality \eqref{eq:opi_pi}, we obtain
    $$0\leq g\big(x,\mu^{0}(x)\big)\leq (TJ_{0})(x)+\epsilon_0v(x),\qquad \forall x\in X.$$
    Since $TJ_0,\epsilon_0v\in \mathcal B_+(X)$, we have that $\mu^0$ is proper. By Prop.~\ref{prop:self_map}(a), $T_{\mu^0}$ is a self map on $\mathcal B_+(X)$, which implies that $J_1\in \mathcal B_+(X)$. Suppose that $J_k\in \mathcal B_+(X)$. By repeating the preceding arguments with $k$ and $k+1$ in places of $0$ and $1$, respectively, we have that $\mu^k$ is proper and $J_{k+1}\in \mathcal B_+(X)$. 

    Next we develop lower bounds for the functions $J_k$, $k=0,1,\dots$. The lower bounds are given by the following inequalities
    \begin{equation}
        \label{eq:opi_lb}
        J^*-\frac{\delta_k}{1-\alpha}v\leq J_k,\qquad k=0,1,\dots,
    \end{equation}
    where $\{\delta_k\}$ is a sequence of nonnegative scalars to be introduced shortly. For this purpose, it is sufficient to prove
    \begin{equation}
        \label{eq:opi_tj_j}
        TJ_k\leq J_k+\delta_kv,\qquad k=0,1,\dots.
    \end{equation}
    Indeed, using Prop.~\ref{prop:monotone_contr_ineq} with $T$, $J_k$, and $\delta_k$ in places of $F$, $J$, and $c$, we have that inequality~\eqref{eq:opi_tj_j} implies the relation \eqref{eq:opi_lb}.
    
    We now proceed to show the inequalities~\eqref{eq:opi_tj_j} by induction. We set $\delta_0=\|TJ_0-J_0\|_v$. This value is well-defined as $J_0$ belongs to $\mathcal B_+(X)$ and $T$ is a self-map on $\mathcal B_+(X)$. As a result, we obtain $TJ_0\leq J_0+\delta_0v$. Suppose that $TJ_{k}\leq J_k+\delta_kv$. Since $T_{\mu^k}J_k\leq TJ_k+\epsilon_kv$, we obtain 
    \begin{equation}
        \label{eq:opi_tmuj_j}
        T_{\mu^k}J_k\leq J_k+\xi_kv,
    \end{equation}
    where $\xi_k=\delta_k+\epsilon_k$. Applying on both sides of the preceding inequality with $T_{\mu^k}^{\ell_k}$, we obtain
    $$T_{\mu^k}^{\ell_k+1}J_k\leq T_{\mu^k}^{\ell_k}(J_k+\xi_kv)\leq T_{\mu^k}^{\ell_k}J_k+\alpha^{\ell_k}\xi_kv=J_{k+1}+\delta_{k+1}v,$$
    where $\delta_{k+1}=\alpha^{\ell_k}\xi_k$. In addition, we also have
    $$T_{\mu^k}^{\ell_k+1}J_k=T_{\mu^k}(T_{\mu^k}^{\ell_k}J_k)=T_{\mu^k}J_{k+1}\geq TJ_{k+1}.$$
    Combining the inequalities $TJ_{k+1}\leq T_{\mu^k}^{\ell_k+1}J_k$ and $T_{\mu^k}^{\ell_k+1}J_k\leq J_{k+1}+\delta_{k+1}v$, we obtain the desired relation.

    In what follows, we construct upper bounds for the functions $J_k$, $k=0,1,\dots$. We have shown that the inequality \eqref{eq:opi_tmuj_j} holds for all $k$. Repeated applying the operator $T_{\mu^k}$ on both sides, and using the weight-shift property, we obtain
    $$T_{\mu^k}^{i+1}J_k\leq T_{\mu^k}^{i}J_k+\alpha^i\xi_kv,\qquad i=1,\dots,\ell_k-1.$$
    Adding over these inequalities, we obtain 
    $$T_{\mu^k}^{\ell_k}J_k\leq T_{\mu^k}J_k+\frac{\alpha-\alpha^{\ell_k}}{1-\alpha}\xi_kv\leq T_{\mu^k}J_k+\frac{\alpha}{1-\alpha}\xi_kv.$$
    Combining the preceding inequality with the relations $J_{k+1}=T_{\mu^k}^{\ell_k}J_k$ and $T_{\mu^k}J_k\leq TJ_k+\epsilon_kv$, we obtain 
    \begin{equation}
        \label{eq:opi_t_tj}
        J_{k+1}\leq TJ_k+\zeta_kv,
    \end{equation}
    where $\zeta_k=\epsilon_k+(\alpha\xi_k)/(1-\alpha)$. Since the inequality~\eqref{eq:opi_t_tj} holds for all $k$, we obtain
    $$J_k\leq TJ_{k-1}+\zeta_{k-1}v\leq T(TJ_{k-2}+\zeta_{k-2}v)+\zeta_{k-1}v\leq T^2J_{k-2}+(\alpha\zeta_{k-2}+\zeta_{k-1})v.$$
    Proceeding similarly leads to
    $$J_k\leq T^kJ_0+(\alpha^{k-1}\zeta_0+\alpha^{k-2}\zeta_1\cdots+\zeta_{k-1})v,$$
    or equivalently,
    \begin{equation}
        \label{eq:opi_ub}
        J_k\leq T^kJ_0+\eta_{k-1}v,
    \end{equation}
    where $\{\eta_k\}$ is a sequence of nonnegative scalars defined recursively by $\eta_{k+1}=\alpha\eta_k+\zeta_{k+1}$, $k=0,1,\dots$, and $\eta_0=\zeta_0$. The inequality~\eqref{eq:opi_ub} provides an upper bound for $J_k$. 
    
    To see that $\|J_k-J^*\|_v\to0$, we combine inequalities~\eqref{eq:opi_lb} and \eqref{eq:opi_ub} and obtain
    $$-\frac{\delta_k}{1-\alpha}v\leq J_k-J^*\leq T^kJ_0-J^*+\eta_{k-1}v.$$
    Since $\|T^kJ_0-J^*\|_v\to0$, it suffices to show that $\delta_k\to0$ and $\eta_k\to0$. Recall that $\delta_0=\|TJ_0-J_0\|_v$ and $\delta_{k+1}=\alpha^{\ell_k}(\delta_k+\epsilon_k)\leq \alpha(\delta_k+\epsilon_k)$. Taking limit superior on both sides of the inequality, and in view that $\epsilon_k\to0$, we obtain $\delta_k\to0$. Since $\xi_k=\delta_k+\epsilon_k$, we also obtain $\xi_k\to0$. As a result, $\zeta_k\to0$, as $\zeta_k=\epsilon_k+(\alpha\xi_k)/(1-\alpha)$. Finally, taking limit superior on both sides of the equality $\eta_{k+1}=\alpha\eta_k+\zeta_{k+1}$, and since $\zeta_k\to0$, we obtain $\eta_k\to0$.  

    Finally, we show that $\|J_{\mu^k} - J^*\|_v \to 0$. Since $T_{\mu^k}J_k\leq J_k+\xi_kv$ [cf. inequality~\eqref{eq:opi_tmuj_j}], applying Prop.~\ref{prop:monotone_contr_ineq} with $T_{\mu^k}$, $J_k$, and $\xi_k$ in places of $F$, $J$, and $c$, we obtain $J_{\mu^k}\leq J_k+\frac{\xi_k}{1-\alpha}v$. Subtracting on both sides $J^*$ and due to the inequality $J^*\leq J_{\mu^k}$, we have
    $$0\leq J_{\mu^k}-J^*\leq J_k-J^*+\frac{\xi_k}{1-\alpha}v.$$
    This implies $\|J_{\mu^k}-J^*\|_v\leq \|J_k-J^*\|_v+\frac{\xi_k}{1-\alpha}$. We have shown that $\|J_k-J^*\|_v\to0$ and $\xi_k\to0$. As a result, $\|J_{\mu^k}-J^*\|_v\to0$. 
\end{proof}

\section{Numerical Example}
\label{sec:numerical_result}

We illustrate the developed DP theory using a scalar example of~\eqref{eq:io} with noise variance $\sigma^2=1$. We use a two-point innovation model with support $W=\{-1,1\}$, where each value occurs with probability $0.5$. This model preserves the zero mean and unit variance of the Gaussian innovation. Consider the following scalar stage cost
$$
g(x_k,u_k)
=
\big(u_k-m_k\big)^2
+
m_k^2
+
2P_k .
$$
The first term penalizes deviations from the reference input $u_k=m_k$, while $m_k^2+2P_k$ penalizes the belief state.
The optimal cost function is computed by VI\footnote{PI and optimistic PI give similar results.} in~\eqref{eq:vi} with discount factor $\alpha=0.5$. 

In the numerical study, the DP policy is approximated by grid-based VI. We discretize the belief-state space and the control set, evaluate the Bellman objective over the control grid at each belief-state grid point, and select the minimizing control by exhaustive grid search.
Specifically, we use a $121\times81$ belief-state grid: the posterior mean is discretized uniformly over $[-6,6]$ with spacing $0.1$, while the posterior variance grid consists of $P=0$ and 80 logarithmically spaced points between $10^{-3}$ and $10$. For clarity, the figures display only the region $(m,P)\in[-3,3]\times[0,3]$. The control grid contains 401 uniformly spaced points over $[-8,8]$, with spacing $0.04$.
Off-grid cost values are evaluated by bilinear interpolation.
VI is initialized with $J_0\equiv0$ and terminated when the maximum absolute difference between successive value iterates, normalized by the weight $v(x)=1+m^2+P$ at each belief-state grid point, falls below $10^{-6}$, or when 500 iterations have
been completed. This procedure yields numerical approximations of the optimal policy $\mu^*$ and the optimal cost function $J^*$ for the two-point innovation model.

For comparison, we consider the reference control law
$$
\nu(m,P)=m.
$$
This policy represents the exploitation input: it tracks the current posterior mean and does not intentionally increase future information. The deviation of the optimal policy $\mu^*(m,P)$ from the reference control law $\nu(m,P)$ therefore captures the exploratory part of the input induced by the DP solution at state $(m,P)$.

Figure~\ref{fig:scalar_value} shows the computed optimal cost function $J^*(m,P)$, while Fig.~\ref{fig:scalar_policy} compares the optimal policy $\mu^*(m,P)$ with the reference input law $\nu(m,P)$. 
We partition the $(m,P)$-space into Regions A and B: Region A corresponds to
$\lvert \mu^*(m,P)-\nu(m,P)\rvert\leq 0.1$, whereas Region B corresponds to
$\lvert \mu^*(m,P)-\nu(m,P)\rvert>0.1$.
The numerical results show that $\mu^*$ deviates from $\nu$ when the posterior mean has small magnitude and the posterior variance is large. This is precisely Region B of Fig.~\ref{fig:scalar_policy} where the exploitation input $\nu(m,P)=m$ provides limited information, because its magnitude is small, while uncertainty about the parameter remains large. The optimal policy therefore increases the input magnitude relative to $\nu$ to generate more information. This exploratory deviation incurs a larger immediate tracking cost but reduces future uncertainty through the belief update. As $P$ decreases or the magnitude of $m$ increases in Region A of Fig.~\ref{fig:scalar_policy}, the value of exploration decreases, and the optimal policy $\mu^*$ gradually approaches the reference control law $\nu$.

\begin{figure}[htbp]
\centering
\includegraphics[width=0.75\linewidth]{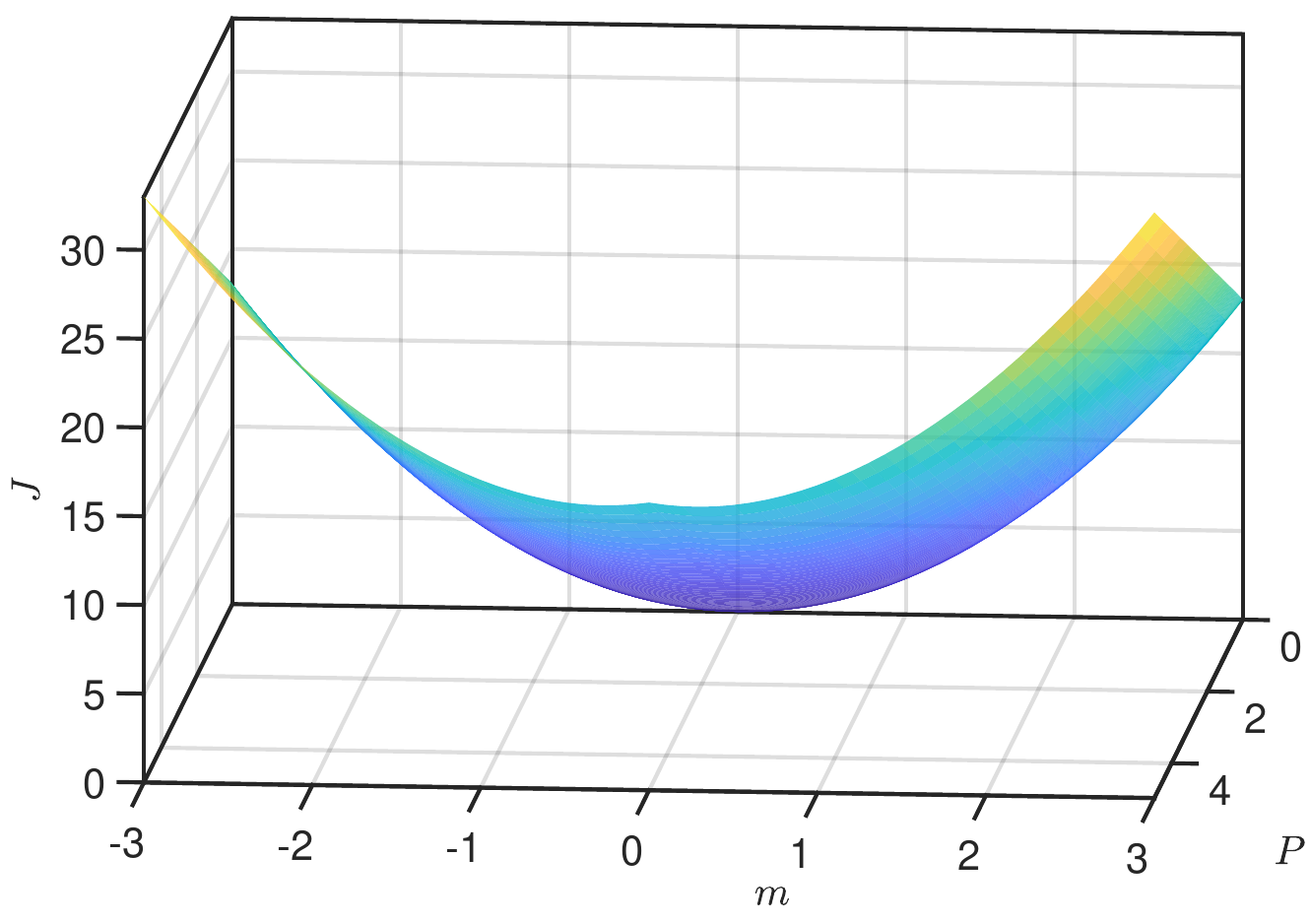}
\caption{Computed optimal cost function $J^*(m,P)$ for the scalar example with $\alpha=0.5$.}
\label{fig:scalar_value}
\end{figure}

\begin{figure}[htbp]
\centering
\includegraphics[width=0.75\linewidth]{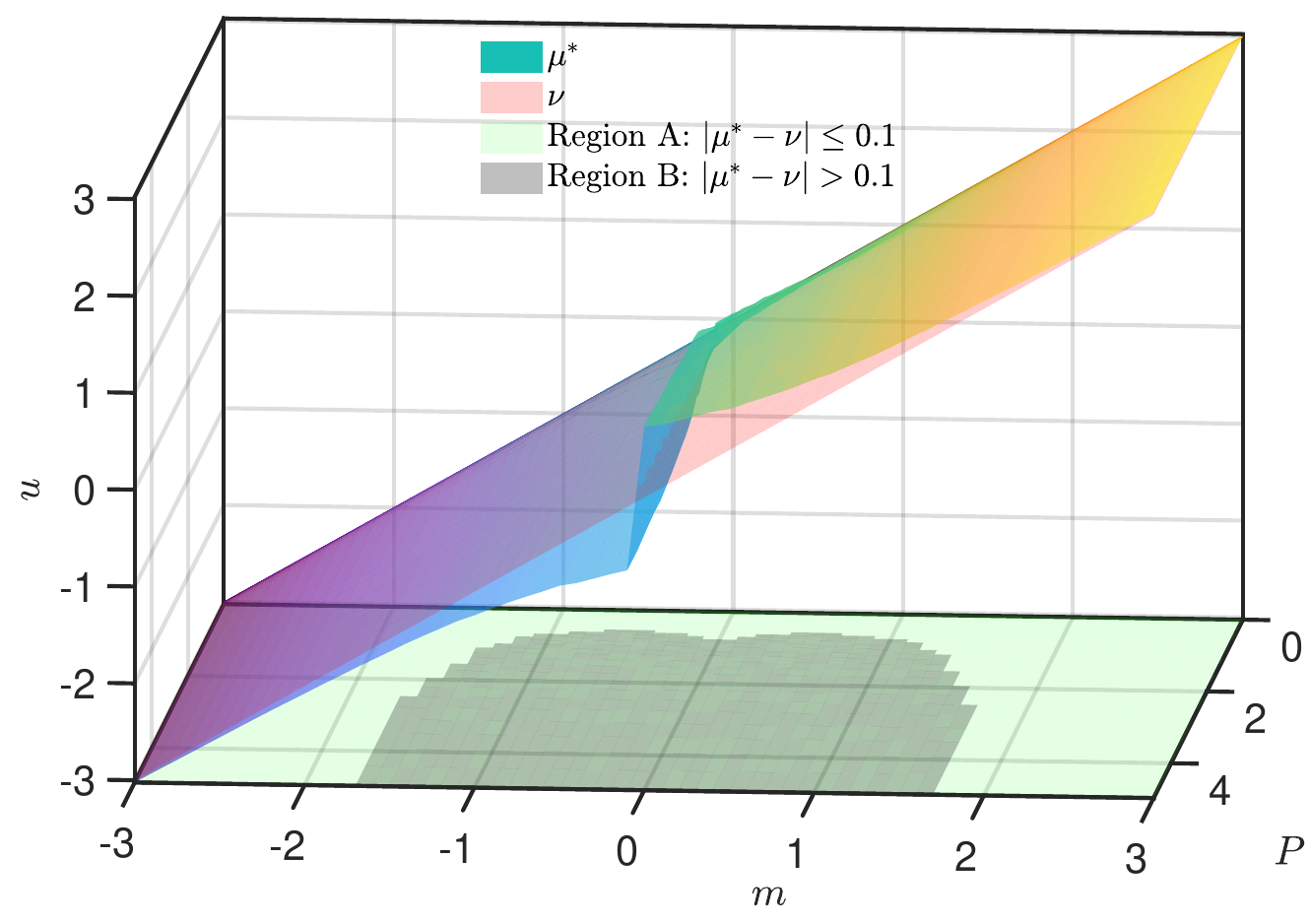}
\caption{Optimal policy $\mu^*(m,P)$ compared with the reference control law $\nu(m,P)=m$.}
\label{fig:scalar_policy}
\end{figure}

\section{Concluding Remarks}\label{sec:conclusion}

In this work, we developed a weighted sup-norm space DP analysis for a countable-innovation surrogate of a linear-Gaussian dual control problem with unbounded belief and control spaces and quadratically growing stage costs. The key structural property is the preservation of the posterior-moment weight under the belief-state transition. This property makes it possible to work directly in a weighted sup-norm space despite the unbounded costs. In particular, it yields self-map and contraction properties of the Bellman operator and of the policy-specific Bellman operators associated with proper policies. Building on these properties, we characterized the optimal cost as the unique solution of the Bellman equation in the weighted space and established the existence and properness of optimal stationary policies. 

We further established weighted sup-norm convergence of VI, PI, and optimistic PI under vanishing approximation errors. For PI and optimistic PI, we showed that the generated policies remain proper; for optimistic PI, both the iterated cost functions and the cost functions of the generated policies converge to the optimal cost. The convergence guarantees for VI, PI, and optimistic PI under vanishing approximation errors are based on general weighted sup-norm contraction arguments and thus extend beyond the specific dual control model studied here.

The notion of a proper policy used here is closely connected with the regular-policy framework of the abstract DP introduced in \cite{bertsekas2017regular}. In that framework, a stationary policy $\mu$ is \emph{regular} with respect to a function space $S$ if its cost function belongs to $S$, satisfies the corresponding Bellman's equation, and the iterates $T_\mu^kJ$ converge to $J_\mu$ for every $J\in S$. In the present problem, taking $S=\mathcal B_+(X)$, this notion coincides with our definition of properness. Indeed, if $\mu$ is proper, then $T_\mu$ is a contraction on $\mathcal B_+(X)$, so that $J_\mu\in\mathcal B_+(X)$ is its unique fixed point and $T_\mu^kJ\to J_\mu$ for all $J\in \mathcal B_+(X)$. Conversely, regularity with respect to $\mathcal B_+(X)$ requires $J_\mu\in\mathcal B_+(X)$, which is equivalent to properness by Prop.~\ref{prop:proper_value_equivalence}. Thus, the proper-policy class $\mathcal M_v$ provides a concrete characterization of the regular policies for the weighted function space used in this paper.

This connection also highlights a distinction between our theory and the broader regular-policy theory. The regular-policy framework is designed to handle semicontractive DP models in which some policies may exhibit pathological fixed-point or iteration behavior and the Bellman operator need not be contractive on the function space of interest. In our problem, the posterior weight-preservation identity yields a genuine contraction structure on $\mathcal B_+(X)$. As a result, the regularity of proper policies follows directly from contraction, while the Bellman operator itself has a unique fixed point in the weighted space. Moreover, under the assumptions used in this paper, every optimal stationary policy is proper. Hence, optimization over the proper-policy class does not exclude optimal stationary policies.

Several directions remain open. The countable-innovation model preserves the first two moments of the Gaussian innovation and hence the posterior-moment identity that underlies the present analysis, but the relation between its optimal cost and policy and those of the original continuous-innovation problem remains to be quantified. Besides, it would be useful to determine how far the weighted sup-norm space and proper-policy arguments developed here can be extended to broader classes of partially observed control problems with unbounded state and control spaces. Some additional DP algorithms, such as $\lambda$-PI discussed in \cite{li2020lambda}, may also worth exploring in the present context.

\appendix

\section{Linear-Gaussian Dual Control Problem}
\subsection{Bayesian Update in Innovation Form}
\label{app:innovation}
Consider the linear-Gaussian model \eqref{eq:io}. Recall that, given the
information $I_k$, the posterior distribution of $\theta$ is Gaussian with
mean $m_k$ and covariance $P_k$. Thus,
\begin{equation*}
    E\{\theta\mid I_k\}=m_k,
    \qquad
    E\big\{(\theta-m_k)(\theta-m_k)'\mid I_k\big\}=P_k.
\end{equation*}
Since $ y_k=u_k'\theta+e_k$, the one-step prediction error can be written as
\begin{equation*}
    y_k-u_k'm_k
    =
    u_k'(\theta-m_k)+e_k.
\end{equation*}
Because $e_k$ is independent of $\theta$ and $I_k$, with zero mean and
variance $\sigma^2$, it follows that
\begin{equation*}
    E\{y_k-u_k'm_k\mid I_k\}=0,\qquad E\big\{(y_k-u_k'm_k)^2\mid I_k\big\}
    =
    \sigma^2+u_k'P_ku_k.
\end{equation*}
Moreover, since $\theta$ and $e_k$ are Gaussian, the prediction error is
Gaussian conditional on $I_k$. Define the standardized innovation by
\begin{equation}
    \epsilon_k=\frac{y_k-u_k'm_k}
    {\sqrt{\sigma^2+u_k'P_ku_k}}.
\label{eq:inno}
\end{equation}
Conditional on $I_k$, the random variable $\epsilon_k$ is Gaussian with zero
mean and unit variance.

We next derive the posterior mean update. 
The conditional cross moment between $\theta-m_k$ and the prediction error is
\begin{align*}
    &E\big\{(\theta-m_k)(y_k-u_k'm_k)\mid I_k\big\} \\
    =&
    E\big\{(\theta-m_k)
    \big(u_k'(\theta-m_k)+e_k\big)\mid I_k\big\} \\
    =&P_ku_k.
\end{align*}
Therefore, by the conditional-mean formula in~\cite[Lem.~2.3]{soderstrom2012discrete} for the random variables
$\theta$ and $y_k$, which are jointly Gaussian conditional on $I_k$,
\begin{align*}
    m_{k+1}
    &=
    m_k
    +
    \frac{P_ku_k}
    {\sigma^2+u_k'P_ku_k}
    \big(y_k-u_k'm_k\big) \\
    &=
    m_k
    +
    \frac{P_ku_k}
    {\sqrt{\sigma^2+u_k'P_ku_k}}
    \epsilon_k,
\end{align*}
which gives~\eqref{eq:m_update}.

Similarly, the conditional-covariance formula for these two jointly Gaussian random variables gives
\begin{align*}
    P_{k+1}
    &=
    P_k
    -
    \frac{
        E\big\{(\theta-m_k)(y_k-u_k'm_k)\mid I_k\big\}
        E\big\{(y_k-u_k'm_k)(\theta-m_k)'\mid I_k\big\}
    }{
        E\big\{(y_k-u_k'm_k)^2\mid I_k\big\}
    } \\
    &=
    P_k
    -
    \frac{P_ku_ku_k'P_k}
    {\sigma^2+u_k'P_ku_k},
\end{align*}
which gives~\eqref{eq:P_update}. These are the standard Bayesian updates for
the linear-Gaussian regression model; see~\cite[Sec.~11]{ljung1999system}.

\subsection{Quadratic Stage Cost}
\label{app:cost_motivation}

We now show how the quadratic stage cost in Eq.~\eqref{eq:cost_def} arises from a natural complete-information stage cost under incomplete information. Suppose that, if the parameter $\theta\in\Re^n$ were known, the complete-information problem would prescribe the affine input
\begin{equation}
    \upsilon^*(\theta)=K\theta+b,
    \label{eq:oracle_policy}
\end{equation}
where $K$ is a nonzero $n$ by $n$ matrix and $b\in\Re^n$. Consider the complete-information stage cost
\begin{equation*}
    \hat g(\theta,u)
    =
    \bigl(u-\upsilon^*(\theta)\bigr)'R
    \bigl(u-\upsilon^*(\theta)\bigr)
    +
    \hat\psi(\theta),
\end{equation*}
where $R$ is positive definite and $\hat\psi:\Re^n\mapsto[0,\infty)$ is independent of the current control. The term $\hat\psi(\theta)$ does not affect the minimizing control, but allows the loss to include an additional penalty that depends on the unknown parameter.

Given the belief state $x=(m,P)$, the posterior mean and covariance of $\theta$ satisfy
\begin{equation*}
    E\{\theta\mid x\}=m,
    \qquad
    E\big\{(\theta-m)(\theta-m)'\mid x\big\}=P.
\end{equation*}
The corresponding posterior expected loss is
\begin{equation}
\begin{aligned}
    g(x,u)
    &=
    E\big\{\hat g(\theta,u)\mid x\big\} \\
    &=
    E\big\{
        (u-K\theta-b)'R(u-K\theta-b)
        \mid x
    \big\}
    +
    E\big\{\hat\psi(\theta)\mid x\big\}.
\end{aligned}
\label{eq:cost_org}
\end{equation}

Replacing the unknown parameter $\theta$ in Eq.~\eqref{eq:oracle_policy} by its posterior mean gives the reference input $\nu(x)=\upsilon^*(m)=Km+b$. Using this definition, we obtain $u-K\theta-b=u-\nu(x)-K(\theta-m)$. Substituting this relation into Eq.~\eqref{eq:cost_org} and expanding the quadratic term gives
\begin{align*}
    &E\big\{
        (u-K\theta-b)'R(u-K\theta-b)
        \mid x
    \big\} \\
    =&
    \bigl(u-\nu(x)\bigr)'R\bigl(u-\nu(x)\bigr)-2\bigl(u-\nu(x)\bigr)'RK
    E\{\theta-m\mid x\}+\\
    &E\big\{
        (\theta-m)'K'RK(\theta-m)
        \mid x
    \big\}.
\end{align*}
Since
\begin{equation*}
    E\{\theta-m\mid x\}=0,\qquad E\big\{
        (\theta-m)'K'RK(\theta-m)
        \mid x
    \big\}
    =
    \operatorname{tr}(K'RKP),
\end{equation*}
we obtain
\begin{equation*}
    g(x,u)
    =
    \bigl(u-\nu(x)\bigr)'R\bigl(u-\nu(x)\bigr)
    +
    \operatorname{tr}(K'RKP)
    +
    E\big\{\hat\psi(\theta)\mid x\big\}.
\end{equation*}
Thus, the posterior expected loss has precisely the form~\eqref{eq:cost_def}, with
\begin{equation}
    \psi(x)
    =
    \operatorname{tr}(K'RKP)
    +
    E\big\{\hat\psi(\theta)\mid x\big\}.
\label{eq:psi_def}
\end{equation}

\subsection{Example~\ref{eg:affine}}
\label{app:eg}

We now recover the particular belief-state penalty used in Example~\ref{eg:affine}. Let the parameter-dependent term in the complete-information stage cost be $\hat\psi(\theta)=\theta'H\theta$, where $H$ is positive definite. Since the posterior mean and covariance of $\theta$ at the belief state $x=(m,P)$ are $m$ and $P$, respectively,
\begin{equation*}
    E\big\{\hat\psi(\theta)\mid x\big\}
    =
    E\big\{\theta'H\theta\mid x\big\}
    =
    m'Hm+\operatorname{tr}(HP).
\end{equation*}
Substituting this relation into Eq.~\eqref{eq:psi_def} gives
\begin{align*}
    \psi(x)
    &=
    \operatorname{tr}(K'RKP)
    +
    m'Hm
    +
    \operatorname{tr}(HP) \\
    &=
    m'Hm
    +
    \operatorname{tr}\bigl((H+K'RK)P\bigr).
\end{align*}
Thus, defining $G=H+K'RK$, we obtain
\begin{equation*}
    \psi(x)
    =
    m'Hm+\operatorname{tr}(GP),
\end{equation*}
which is precisely the belief-state penalty in Example~\ref{eg:affine}.

\section{Infinite Horizon Dynamic Programming and Contraction Mappings}\label{app:classical}
The following proposition summarizes two standard results from infinite horizon DP theory for nonnegative cost problems. They can be found in, for example, \cite[Chapter~4]{bertsekas2012dynamic}.

\begin{proposition}\label{prop:classical}Consider the optimal control problem introduced in Section~\ref{sec:dp_prob_formulation}, where Assumption~\ref{ass:prb_set} may or may not be true. The following statements hold.
\begin{itemize}
    \item[(a)] For every  policy $\mu$, its cost function satisfies the policy-evaluation equation
    $$J_{\mu}(x)=g\big(x,\mu(x)\big)+\alpha E\big\{J_{\mu}\big(f\big(x,\mu(x),w\big)\big)\big\},\qquad \forall x\in X.$$
    Moreover, if a nonnegative function $\hat{J}:X\mapsto[0,\infty]$ satisfies
    \begin{equation}
    \label{eq:classic_mu_bound}
        g\big(x,\mu(x)\big)+\alpha E\big\{\hat{J}\big(f\big(x,\mu(x),w\big)\big)\big\}\leq \hat{J}(x),\qquad \forall x\in X,
    \end{equation}
    then $J_\mu\leq \hat{J}$.
    \item[(b)] Let $\{J_k\}$ be the sequence of functions generated by the VI algorithm \eqref{eq:vi} with $J_0\equiv0$. Suppose that there exists some $\bar k$ such that the level set
    $$U_k(x,\lambda)=\Big\{ u\in U\,  \Big| \, g(x,u)+\alpha E\big\{ J_k\big(f(x,u, w)\big)\big\}\le \lambda\Big\}$$
    is compact for every $k\geq \bar k$, $x\in X$, and $\lambda\in \Re$. Then $J_k(x)\to J^*(x)$ for all $x\in X$. Moreover, there exists an optimal stationary policy.
\end{itemize}
\end{proposition}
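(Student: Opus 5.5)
The plan is to reduce everything to monotone limits of iterated operators applied to the zero function. Since $W$ is countable, every expectation $E\{J(f(x,u,w))\}$ is a countable sum of nonnegative terms. Hence the monotone convergence theorem lets us exchange limits and expectations freely, even for extended-valued functions $J:X\mapsto[0,\infty]$. I would first extend $T_\mu$ and $T$ to such extended-valued functions and note that monotonicity \eqref{eq:monotone} still holds there.

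For part (a), fix $\mu$. By the Markov structure of \eqref{eq:dynamics_f_def} and induction on $N$, the $N$-stage expected cost equals $(T_\mu^N 0)(x_0)$, where $0$ denotes the zero function. Since $g\ge 0$, the sequence $T_\mu^N0$ is nondecreasing in $N$, so $J_\mu=\lim_N T_\mu^N 0$. Writing $T_\mu^{N+1}0=g(\cdot,\mu(\cdot))+\alpha E\{(T_\mu^N0)(f(\cdot,\mu(\cdot),w))\}$ and letting $N\to\infty$ with monotone convergence gives the policy-evaluation equation. For the comparison statement, take $\hat J\ge 0$ with $T_\mu\hat J\le\hat J$. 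Monotonicity gives $T_\mu^N0\le T_\mu^N\hat J\le\hat J$ for all $N$, and passing to the limit yields $J_\mu\le\hat J$. The same argument applied to a nonstationary $\pi$ gives $J_\pi=\lim_N T_{\mu_0}\cdots T_{\mu_{N-1}}0$.

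For part (b), set $H_k(x,u)=g(x,u)+\alpha E\{J_k(f(x,u,w))\}$. Because $J_{k+1}=T^{k+1}0\ge T^k0=J_k$, the iterates $J_k$ are nondecreasing, so the limit $J_\infty=\lim_k J_k$ exists in $[0,\infty]$. We also have $J_k=T^k0\le T_{\mu_0}\cdots T_{\mu_{k-1}}0\le J_\pi$ for every $\pi$, hence $J_\infty\le J^*$. The reverse inequality is the core of the proof. Fix $x$; if $J_\infty(x)=\infty$ there is nothing to prove, so assume $J_\infty(x)<\infty$. Compactness of all level sets $U_k(x,\lambda)$ implies they are closed, so $H_k(x,\cdot)$ is lower semicontinuous for $k\ge\bar k$. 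Since $J_{k+1}(x)=\inf_u H_k(x,u)<\infty$, some level set $U_k(x,\lambda)$ is nonempty and compact, so the infimum defining $J_{k+1}(x)$ is attained at some $u_k$ with $H_k(x,u_k)=J_{k+1}(x)\le J_\infty(x)$. Because $H_k\le H_j$ for $j\ge k$, all $u_j$ with $j\ge k$ lie in the compact set $U_k(x,J_\infty(x))$. Extracting a subsequence converging to some $\bar u$ and using closedness gives $H_k(x,\bar u)\le J_\infty(x)$ for every $k\ge\bar k$. Letting $k\to\infty$ and applying monotone convergence yields $g(x,\bar u)+\alpha E\{J_\infty(f(x,\bar u,w))\}\le J_\infty(x)$.

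Now define $\mu^*(x)=\bar u$ at states with $J_\infty(x)<\infty$, and let $\mu^*(x)$ be arbitrary elsewhere. Then $T_{\mu^*}J_\infty\le J_\infty$ pointwise, so part (a) gives $J_{\mu^*}\le J_\infty\le J^*\le J_{\mu^*}$. Hence $J_k\to J^*$ and $\mu^*$ is an optimal stationary policy. The main obstacle is the attainment and limit-extraction step: it needs the $k$-independent compact set $U_{\bar k}(x,J_\infty(x))$, which is where monotonicity of $H_k$ in $k$ is essential, and it needs the infinite-value case handled separately.
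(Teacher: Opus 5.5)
Your proof is correct and is essentially the standard argument behind the textbook results that the paper cites without proof (Bertsekas, Ch.~4). For part (a) you take the monotone limit $J_\mu=\lim_N T_\mu^N 0$ and use monotone convergence, which is harmless here since $W$ is countable. For part (b) you combine $J_\infty\le J^*$ with extraction of a limit control from the $k$-independent compact set $U_{\bar k}(x,J_\infty(x))$ to obtain $T_{\mu^*}J_\infty\le J_\infty$, and then apply your part (a) comparison with $\hat J=J_\infty$, which gives both $J_k\to J^*$ and optimality of $\mu^*$ at once.
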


Another important result that we need in our analysis is a modified version of the Banach fixed-point theorem, which we state below; cf. \cite[Prop.~B.1]{bertsekas2022abstract}.
 
\begin{proposition}\label{prop:banach}
Let $Z$ be a complete vector space equipped with norm $\|\cdot\|_v$ and let $\overline{Z}$ be a closed subset of $Z$. Then if $F:\overline Z\mapsto\overline Z$ is a contraction mapping with modulus $\rho\in (0,1)$, there exists a unique $z^*\in \overline Z$ such that 
$$z^*=Fz^*.$$
Moreover, for any $z\in \overline{Z}$, the sequence $\{F^kz\}$ converges to $z^*$ in the sense that $\|F^kz-z^*\|_v\to0$, where $F^k$ denotes the $k$-fold composition of $F$, and we have 
$$\|F^kz-z^*\|\leq \rho^k\|z-z^*\|_v,\qquad k=1,2,\dots.$$
\end{proposition}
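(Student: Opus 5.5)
This is the standard Banach fixed-point argument, run entirely inside the closed subset $\overline Z$; completeness of $Z$ supplies the limit point. Fix an arbitrary $z_0=z\in\overline Z$ and define $z_k=F^kz$. Since $F$ maps $\overline Z$ into itself, every $z_k$ lies in $\overline Z$.

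\textbf{Step 1: the iterates form a Cauchy sequence.} Repeated use of the contraction inequality gives
$$\|z_{k+1}-z_k\|_v\le \rho^k\|z_1-z_0\|_v .$$
For $m>k$, the triangle inequality and the geometric series then give
$$\|z_m-z_k\|_v\le \sum_{i=k}^{m-1}\rho^i\|z_1-z_0\|_v\le \frac{\rho^k}{1-\rho}\|z_1-z_0\|_v .$$
The right-hand side tends to zero as $k\to\infty$, so $\{z_k\}$ is Cauchy.

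\textbf{Step 2: the limit exists and lies in $\overline Z$.} By completeness of $Z$, there is $z^*\in Z$ with $\|z_k-z^*\|_v\to0$. Because $\overline Z$ is closed and every $z_k\in\overline Z$, also $z^*\in\overline Z$. This step is the only place where closedness matters. It is the one point needing care, since $F$ is defined only on $\overline Z$: without closedness, $Fz^*$ would not even be defined.

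\textbf{Step 3: $z^*$ is a fixed point.} Since $z^*\in\overline Z$, the contraction property applies to the pair $(z_k,z^*)$, and
$$\|Fz^*-z^*\|_v\le \|Fz^*-Fz_k\|_v+\|z_{k+1}-z^*\|_v\le \rho\|z^*-z_k\|_v+\|z_{k+1}-z^*\|_v .$$
Both terms tend to zero as $k\to\infty$, so $Fz^*=z^*$.

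\textbf{Step 4: uniqueness.} Suppose $y^*\in\overline Z$ also satisfies $y^*=Fy^*$. Then
$$\|z^*-y^*\|_v=\|Fz^*-Fy^*\|_v\le\rho\|z^*-y^*\|_v .$$
Since $\rho<1$, this forces $\|z^*-y^*\|_v=0$, that is, $y^*=z^*$.

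\textbf{Step 5: the rate.} The fixed point does not depend on the starting point $z$, because it is unique. Using $z^*=F^kz^*$ and the contraction property $k$ times,
$$\|F^kz-z^*\|_v=\|F^kz-F^kz^*\|_v\le\rho^k\|z-z^*\|_v .$$
This gives both the claimed convergence and the geometric rate.
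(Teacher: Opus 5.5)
Your proof is correct and is the standard contraction-mapping argument: a geometric bound on successive differences, a Cauchy limit in $Z$ that closedness keeps in $\overline Z$, the fixed-point property via the triangle inequality, uniqueness, and the rate from $z^*=F^kz^*$. The paper gives no proof of its own and simply cites Prop.~B.1 of Bertsekas's \emph{Abstract Dynamic Programming}, whose proof follows essentially these same steps.
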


Finally, we state a result that applies to contraction mappings on $\mathcal{B}_+(X)$ with monotonicity property; cf. \cite[Prop.~2.1.4(b)]{bertsekas2022abstract}.
 \begin{proposition}\label{prop:monotone_contr_ineq}
Let $F$ be a contraction mapping on the closed set $\mathcal{B}_+(X)$ with modulus $\rho\in (0,1)$, and let $\hat J$ be its unique fixed point. Suppose that $F$ is monotone in the sense that for all $J,\tilde J\in \mathcal{B}_+(X)$, $J\leq \tilde J$ implies $FJ\leq F\tilde J$. For any $J\in \mathcal{B}_+(X)$ and $c\geq 0$, we have $FJ\leq J+cv$ implies that $\hat J\leq J+\frac{c}{1-\rho}v$. 
\end{proposition}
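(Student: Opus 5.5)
The statement is Prop.~\ref{prop:monotone_contr_ineq}. I will prove it by iterating the hypothesis $FJ\le J+cv$, using monotonicity together with a weight-shift inequality, and then passing to the limit via the Banach fixed-point result, Prop.~\ref{prop:banach}.

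\emph{Weight shift.} The first step is to obtain a weight-shift inequality for $F$ from the contraction property alone. For $\tilde J\in\mathcal B_+(X)$ and $c\ge0$, the function $\tilde J+cv$ lies in $\mathcal B_+(X)$ and satisfies $\|(\tilde J+cv)-\tilde J\|_v=c$. Contraction then gives $\|F(\tilde J+cv)-F\tilde J\|_v\le\rho c$. Since the norm is a weighted sup-norm, this yields pointwise
$$F(\tilde J+cv)\le F\tilde J+\rho cv.$$
Monotonicity is not needed for this step.

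\emph{Iteration.} Next I will prove by induction on $k\ge1$ that
$$F^kJ\le J+c\,(1+\rho+\dots+\rho^{k-1})\,v.$$
The case $k=1$ is the hypothesis. Assume the bound holds for $k$. Apply $F$ to both sides; this is legitimate because both sides belong to $\mathcal B_+(X)$ and $F$ is monotone. Then use the weight shift with constant $c(1+\dots+\rho^{k-1})$, followed by $FJ\le J+cv$:
$$F^{k+1}J\le F\Bigl(J+c\textstyle\sum_{i<k}\rho^i v\Bigr)\le FJ+c\textstyle\sum_{i=1}^{k}\rho^i v\le J+c\textstyle\sum_{i=0}^{k}\rho^i v.$$
It follows that $F^kJ\le J+\frac{c}{1-\rho}v$ for every $k$.

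\emph{Limit.} By Prop.~\ref{prop:banach}, $\|F^kJ-\hat J\|_v\to0$. Because $v$ is finite, weighted sup-norm convergence implies pointwise convergence $F^kJ(x)\to\hat J(x)$ at every $x$. Letting $k\to\infty$ in the preceding pointwise bound gives $\hat J\le J+\frac{c}{1-\rho}v$.

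The only real subtlety is the weight-shift step. The sum $\tilde J+cv$ must remain in the domain $\mathcal B_+(X)$, which it does since $c\ge0$ and $v>0$. The conversion from the norm bound to a one-sided pointwise bound is also needed, and it follows directly from the definition $\|\cdot\|_v=\sup_x|\cdot|/v(x)$.
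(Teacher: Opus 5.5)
Your proof is correct and takes essentially the same route as the argument the paper relies on. The paper does not reprove this statement; it cites Prop.~2.1.4(b) of Bertsekas's \emph{Abstract Dynamic Programming}, whose proof, like yours, derives the weight shift $F(\tilde J+cv)\le F\tilde J+\rho cv$ from the contraction property, iterates $FJ\le J+cv$ using monotonicity to get $F^kJ\le J+c(1+\rho+\dots+\rho^{k-1})v$, and passes to the limit using $F^kJ\to\hat J$ from Prop.~\ref{prop:banach}.
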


\section{Proof of Lemma~\ref{lem:vi_lsc_level_compact}}
\label{app:proof_vi_lsc_level_compact}
We first show that for every $k\geq 0$, the function $J_k$ belongs to $\mathcal{B}_+(X)$ and is lower semicontinuous. We proceed by induction. The claim is true for $J_0\equiv 0$. Suppose that the statement holds for $k$. By Prop.~\ref{prop:self_map}, the operator $T$ is a self map on $\mathcal{B}_+(X)$. Therefore, we have $J_{k+1}$ belongs to $\mathcal{B}_+(X)$.

Define $Q_k:X\times U\mapsto \Re$ by
\begin{equation*}
    Q_k(x,u)
    =
    g(x,u)
    +
    \alpha E\big\{
        J_k\big(f(x,u,w)\big)
    \big\}.
\end{equation*}
In what follows, we show that $Q_k(x,u)$ is lower semicontinuous on $X\times U$.

Let $x$ and $u$ be arbitrary state and control, and $\{(x_\ell,u_\ell)\}$ be a sequence of state-control pairs such that $x_\ell\to x$ and $u_\ell\to u$, where $x_\ell=(m_\ell,P_\ell)$. Since
$f_1(x,u,w)$ and $f_2(P,u)$ are continuous, for every $w\in W$, we have
\begin{equation*}
\bigl(f_1(x_\ell,u_\ell,w),f_2(P_\ell,u_\ell)\bigr)
\to
\bigl(f_1(x,u,w),f_2(P,u)\bigr).
\end{equation*}
Since $J_k$ is lower semicontinuous,
\begin{equation}
    \label{eq:j_k_semicon}
    J_k\bigl(f_1(x,u,w),f_2(P,u)\bigr)\le\liminf_{\ell\to\infty}J_k\bigl(f_1(x_\ell,u_\ell,w),f_2(P_\ell,u_\ell)\bigr),
\end{equation}
Since $J_k\ge 0$, taking expectation on both sides of
\eqref{eq:j_k_semicon} with respect to $w$ gives
\begin{align*}
E\big\{J_k\big(f(x,u,w)\big)\big\}
&\leq 
E\Big\{\liminf_{\ell\to\infty}J_k\big(f(x_\ell,u_\ell,w)\big)\Big\}\\
&\le
\liminf_{\ell\to\infty}
E\big\{J_k(f(x_\ell,u_\ell,w))\big\}.
\end{align*}
where the second inequality is due to Fatou's lemma; see, e.g., \cite[Section VI.11]{doob1993measure}. 
Thus this term is lower semicontinuous. Since $g$ is lower semicontinuous under Assumption~\ref{ass:close_U}, $Q_k$ is lower semicontinuous on $X\times U$.

We next show that, for each fixed $x\in X$ and $\lambda\in \Re$, the level set $U_k(x,\lambda)=
\{u\in U\mid Q_k(x,u)\le \lambda\}$ is compact. Since $J_k\ge 0$, we have 
$$
Q_k(x,u)\ge g(x,u) = \big(u-\nu(x)\big)'R\big(u-\nu(x)\big)+\psi(x).
$$
Because $R$ is positive definite, $R$ has a positive minimum eigenvalue, denoted by $\lambda$, such that 
$$
g(x,u) \ge \lambda \|u-\nu(x)\|^2.
$$
For fixed $x$, $\nu(x)$ is fixed. Therefore, if $\|u\|\to \infty$, we have $g(x,u)\to \infty$, and consequently, $Q_k(x,u)\to\infty$. It follows that $U_k(x,\lambda)$ is bounded. Moreover, since $U$ is closed by Assumption~\ref{ass:close_U}, and $Q_k(x,u)$ is lower semicontinuous, the level set $U_k(x,\lambda)$ is closed. 
Lastly, $U_k(x,\lambda)$ is bounded and closed, and hence compact.

It remains to show that $J_{k+1}=TJ_k$ is lower semicontinuous under Assumption~\ref{ass:close_U}. 
Let $\{x_\ell\}$ a convergent sequence in $X$ with its limit as $x$. We will show that
\begin{equation*}
J_{k+1}(x)\le \liminf_{\ell\to\infty}J_{k+1}(x_\ell).
\end{equation*}
If
\begin{equation*}
\liminf_{\ell\to\infty}J_{k+1}(x_\ell)=+\infty,
\end{equation*}
there is nothing to prove. Hence suppose that
\begin{equation}
    \label{eq:L}
    \liminf_{\ell\to\infty}J_{k+1}(x_\ell)=c_1<+\infty.
\end{equation}
By the definition of the lower limit, there exists a subsequence $\{x_\ell\}_{L_1}$ of $\{x_\ell\}$ such that
\begin{equation}
    \label{eq:L1}
    \lim_{\ell\to\infty,\ell\in L_1}J_{k+1}(x_\ell)= c_1<+\infty,
\end{equation}
which means that $J_{k+1}(x_\ell)$ is bounded above.
For each $\ell\in L_1$, since $Q_k(x_\ell, \cdot)$ is lower semicontinuous and has compact level sets, the infimum
\begin{equation*}
J_{k+1}(x_\ell)=\inf_{u\in U}Q_k(x_\ell,u)
\end{equation*}
is attained at the minimizer, which is denoted by $u_\ell$, such that
\begin{equation*}
J_{k+1}(x_\ell)=Q_k(x_\ell,u_\ell).
\end{equation*}
Since $\{J_{k+1}(x_\ell)\}$ is bounded above, there exists $c_2<+\infty$ such that
\begin{equation*}
Q_k(x_\ell,u_\ell)\le c_2
\end{equation*}
for all sufficiently large  $\ell\in L_1$. Since $J_k\ge 0$, the expectation term in $Q_k$ is nonnegative, and therefore
\begin{equation*}
Q_k(x,u)\ge g(x,u), \qquad \forall (x,u)\in X\times U.
\end{equation*}
Hence, combining the above two inequalities gives
\begin{equation*}
g(x_\ell,u_\ell)\le Q_k(x_\ell,u_\ell)\le c_2
\end{equation*}
for all sufficiently large $\ell\in L_1$.
If $\{u_\ell\}$ were unbounded, the stage cost function~\eqref{eq:cost_def}:
\begin{equation*}
g(x_\ell,u_\ell)\to+\infty,
\end{equation*}
which contradicts the boundedness above. 
Thus $\{u_\ell\}$ is bounded,
and there exists a subsequence of $\{u_\ell\}_{L_1}$, which we denote as $\{u_\ell\}_{L_2}$, and some $\bar u \in U$, such that
\begin{equation*}
\lim_{\ell\to\infty,\ \ell\in L_2}u_\ell=\bar u.
\end{equation*}
Since $u_\ell\in U$ for all $\ell\in L_2$ and $U$ is closed by Assumption~\ref{ass:close_U}, we have $\bar u\in U$.
Furthermore,  due to $x_\ell \to x $, it follows that
\begin{equation*}
 (x_\ell, u_\ell) \to (x, \bar u), \quad {\rm as } \quad \ell\to\infty,\ \ell\in L_2.
\end{equation*}
By the joint lower semicontinuity of $Q_k$  on $X\times U$,
\begin{equation*}
Q_k(x,\bar u)
\le
\liminf_{\ell\to\infty,\ \ell\in L_2}Q_k(x_\ell,u_\ell)=\liminf_{\ell\to\infty,\ \ell\in L_2}
J_{k+1}(x_\ell).
\end{equation*}
Since $\bar u\in U$, it is feasible at state $x$. Therefore,
\begin{equation*}
J_{k+1}(x)
=
\inf_{u\in U}Q_k(x,u)
\le
Q_k(x,\bar u).
\end{equation*}
Combining the last two inequalities gives
\begin{equation}\label{eq:temp_ineq}
J_{k+1}(x)
\le
Q_k(x,\bar u)
\le
\liminf_{\ell\to\infty,\ \ell\in L_2}J_{k+1}(x_\ell).
\end{equation}
By~\eqref{eq:L},~\eqref{eq:L1}, $L_1$ was chosen so that
\begin{equation*}
\lim_{\ell\to\infty,\ell\in L_1}J_{k+1}(x_\ell)= c_1=\liminf_{\ell\to\infty}J_{k+1}(x_\ell),
\end{equation*}
and since $L_2\subset L_1$, we have
\begin{equation*}
\liminf_{\ell\to\infty,\ \ell\in L_2}J_{k+1}(x_\ell)
=
\liminf_{\ell\to\infty}J_{k+1}(x_\ell).
\end{equation*}
Therefore, combining it and~\eqref{eq:temp_ineq}, 
\begin{equation*}
J_{k+1}(x)
\le
\liminf_{\ell\to\infty}J_{k+1}(x_\ell).
\end{equation*}
Hence $J_{k+1}$ is lower semicontinuous.

\section{Supplementary Proofs for Section~\ref{sec:weight}}\label{app:additional_proof}

\subsection{Proof of Prop.~\ref{prop:proper}}
\label{app:proper}
Since $U$ is nonempty, let us select an arbitrary $u_0\in U$, and define a  policy $\mu_c$ as $\mu_c(x) \equiv u_0$. By Assumption~\ref{ass:prb_set}, there exist constants
$c_\nu>0$ and $c_\psi>0$ such that
\begin{equation*}
    \|\nu(x)\|^2\le c_{\nu} v(x), \qquad \psi(x)\le c_\psi v(x),
    \qquad \forall x\in X.
\end{equation*}
Hence, for every $x\in X$,
\begin{equation*}
\begin{aligned}
    g(x,\mu_c(x))=g(x,u_0)
    &=
    \bigl(u_0-\nu(x)\bigr)'R\bigl(u_0-\nu(x)\bigr)
    +
    \psi(x) \\
    &\le
    \lambda \|u_0-\nu(x)\|^2
    +
    c_\psi v(x) \\
    &\le
    2\lambda \|u_0\|^2
    +
    2\lambda \|\nu(x)\|^2
    +
    c_\psi v(x) \\
    &\le 2\lambda  \|u_0\|^2
    +
    \bigl(2\lambda  c_\nu+c_\psi\bigr)v(x) \\
     &\le
    \Bigl(
    2\lambda \|u_0\|^2
    +
    2\lambda c_\nu
    +
    c_\psi
    \Bigr)v(x),
\end{aligned}
\end{equation*}
where $\lambda$ denotes the maximum eigenvalue of $R$. The second inequality follows from the relation
\begin{equation*}
\begin{aligned}
    \|u_0-\nu(x)\|^2
    &\le \bigl(\|u_0\|+\|\nu(x)\|\bigr)^2 \\
    &= \|u_0\|^2 + 2\|u_0\|\,\|\nu(x)\| + \|\nu(x)\|^2 \\
    &\le 2\|u_0\|^2+2\|\nu(x)\|^2,
\end{aligned}
\end{equation*}
where the first inequality comes from the triangle inequality and the second inequality comes from $2\|u_0\|\,\|\nu(x)\| \leq \|u_0\|^2+\|\nu(x)\|^2$.
Therefore, we have $\mu_c\in\mathcal M_v$.   

\subsection{Proof of Prop.~\ref{prop:self_map}}
\label{app:self_map}
(a) 
 Recall that according to the definition of $T_\mu$, we have
$$(T_\mu J)(x)=g\big(x,\mu(x)\big)+\alpha E\Big\{J\Big(f\big(x,\mu(x),w\big)\Big)\Big\}.$$
By Prop.~\ref{prop:weight_preservation}, for $J\in\mathcal{B}_+(X)$,
$$0\le E\Big\{J\Big(f\big(x,\mu(x),w\big)\Big)\Big\}
\le
\|J\|_v\,v(x),
\quad \forall x\in X.$$
Furthermore,  since $\mu\in\mathcal M_v$, there exists a constant $c_{\mu}>0$ such that $g(x,\mu(x))\leq c_{\mu}v(x)$. Thus,
\begin{equation}\label{eq:up_JT_mu}
\begin{aligned}
        (T_{\mu}J)(x)&=g(x,\mu(x))+\alpha E\Big\{J\Big(f\big(x,\mu(x),w\big)\Big)\Big\}
        \\
    &\le c_{\mu} v(x) +\alpha \|J\|_v v(x)\\
    &\le \big( c_{\mu}+\alpha\|J\|_v\big) v(x).
\end{aligned}
\end{equation}
Together with $(T_\mu J)(x)\ge0$, which comes from $g\ge0$ and $J\in\mathcal B_+(X)$,  we have
\begin{equation*}
| (T_{\mu}J)(x) |\leq ( c_{\mu} +\alpha\|J\|_v\bigr)v(x).
\end{equation*}
Since $v(x)\geq 0$ for all $x\in X$, dividing both sides with $v(x)$ and taking the supremum over $x\in X$, we obtain
\begin{equation*}
\|T_{\mu}J\|_v\le  c_{\mu}+\alpha\|J\|_v.
\end{equation*}
Thus $T_{\mu}J\in\mathcal{B}_+(X)$, and $T_\mu$ is a self-map on $\mathcal{B}_+(X)$.

(b) By Prop.~\ref{prop:proper}, the set of the proper policies $\mathcal M_v$ is nonempty. Let $\mu$ be a proper policy. By the definition of Bellman operator $T$, we have 
$$(TJ)(x)=\inf_{u\in U}\Big[g(x,u)+\alpha E\Big\{J\Big(f\big(x,u,w\big)\Big)\Big\} \Big]\leq  (T_\mu J)(x), \qquad \forall x\in X.$$
Combining this inequality with \eqref{eq:up_JT_mu} leads to
$$(TJ)(x)\leq \big(c_\mu+\alpha\|J\|_v\big)v(x),\qquad \forall x\in X.$$
Since $g\ge0$ and $J\in\mathcal B_+(X)$, we also have $TJ\ge 0$, thus we obtain
$$| (TJ)(x) |\leq \big(c_\mu+\alpha\|J\|_v\big)v(x).$$
Dividing both sides with $v(x)$ and taking the supremum over $x\in X$, we obtain the inequality $\|TJ\|_v\le  c_{\mu}+\alpha\|J\|_v$. Thus, $TJ\in\mathcal{B}_+(X)$, and the Bellman operator $T$ is a self-map on $\mathcal{B}_+(X)$.

\subsection{Proof of Prop.~\ref{prop:contraction}}
\label{app:contraction}
Let $J,\tilde J\in \mathcal B_+(X)$ and $\mu\in \mathcal M$. For all $x\in X$, we have
\begin{equation}
    \label{eq:T_mu_diff}
\begin{aligned}
    &\big|(T_\mu J)(x)-(T_\mu \tilde J)(x)\big|\\
    =&\bigg|\alpha E\Big\{J\Big(f\big(x,\mu(x),w\big)\Big)\Big\}-\alpha E\Big\{\tilde J\Big(f\big(x,\mu(x),w\big)\Big)\Big\}\bigg|\\
    =&\bigg|\alpha E\Big\{J\Big(f\big(x,\mu(x),w\big)\Big)-\tilde J\Big(f\big(x,\mu(x),w\big)\Big)\Big\}\bigg|\\
    \leq &\alpha E\Big\{\Big|J\Big(f\big(x,\mu(x),w\big)\Big)-\tilde J\Big(f\big(x,\mu(x),w\big)\Big)\Big|\Big\} 
\end{aligned}    
\end{equation}
where the last inequality is due to Jensen's inequality; see, e.g., \cite[Section VI.11]{doob1993measure}.

Since $J,\tilde J\in \mathcal{B}_+(X)$, for every $y\in X$, we have 
\begin{equation*}
    |J(y)-\tilde J(y)|
    \le
    \|J-\tilde J\|_v v(y).
\end{equation*}
Taking $y=f(x,\mu(x),w)$, we obtain
\begin{equation*}
\Big|J\Big(f\big(x,\mu(x),w\big)\Big)-\tilde J\Big(f\big(x,\mu(x),w\big)\Big)\Big| \le 
\| J-\tilde J\|_v\,v\Big(f\big(x,\mu(x),w\big)\Big).
\end{equation*}
Combining the preceding relation with inequality~\eqref{eq:T_mu_diff} gives
\begin{equation*}
|(T_{\mu} J)(x)-(T_{\mu}\tilde J)(x)|
\le
\alpha 
\| J-\tilde J\|_v\,E\Big\{v\Big(f\big(x,\mu(x),w\big)\Big)\Big\}
\end{equation*}
By Prop.~\ref{prop:weight_preservation}, we have $E\{v(f(x,\mu(x),w))\}=v(x)$. Combining with the preceding relation yields
\begin{equation}\label{eq:contraction_up_bound}
|(T_{\mu}J)(x)-(T_{\mu}\tilde J)(x)|
\le
\alpha \|J-\tilde J\|_v\,v(x).
\end{equation}
Dividing on both sides $v(x)$ and taking supremum over $x\in X$, we obtain
\begin{equation}
\label{eq:contraction_app}
\|T_{\mu}J-T_{\mu}\tilde J\|_v\le \alpha \|J-\tilde J\|_v.
\end{equation}

(a) We have shown that inequality \eqref{eq:contraction_app} holds for all $\mu\in \mathcal{M}$. By Prop.~\ref{prop:self_map}, if $\mu$ is proper, its corresponding operator $T_\mu$ is a self map on $\mathcal B_+(X)$. As a result, it is a contraction on $\mathcal B_+(X)$. On the other hand, if $\mu\not\in \mathcal M_v$, the operator $T_\mu$ needs not be a self-map on $\mathcal B_+(X)$ and therefore is not a contraction.

(b) Let $J,\tilde J\in \mathcal B_+(X)$. For all $\mu\in \mathcal M$ and $x\in X$, we have
\begin{align*}
    (T_\mu J)(x)=&(T_\mu \tilde J)(x)+(T_\mu J)(x)-(T_\mu \tilde J)(x)\\
    \leq &(T_\mu \tilde J)(x)+\big|(T_\mu J)(x)-(T_\mu \tilde J)(x)\big|\\
    \leq &(T_\mu \tilde J)(x)+\alpha \|J-\tilde J\|_v\,v(x),
\end{align*}
where the last inequality is due to the relation \eqref{eq:contraction_up_bound}. Taking infimum over $\mu$ on both sides and in view of the relation
$$(TJ)(x)=\inf_{\mu\in \mathcal M}(T_\mu J)(x),\quad (T\tilde J)(x)=\inf_{\mu\in \mathcal M}(T_\mu \tilde J)(x),\qquad \forall x\in X,$$
we obtain
$$(T J)(x)\leq (T \tilde J)(x)+\alpha \|J-\tilde J\|_v\,v(x).$$
Reversing the roles of $J$ and $\tilde J$, we also have
$$(T \tilde J)(x)\leq (T J)(x)+\alpha \|J-\tilde J\|_v\,v(x).$$
Combining the preceding two relations yields the desired inequality.

\printbibliography

\end{document}